\begin{document}
\def\mathscr{\mathcal}
\topmargin= -.2in \baselineskip=15pt
\newtheorem{theorem}{Theorem}[section]
\newtheorem{proposition}[theorem]{Proposition}
\newtheorem{lemma}[theorem]{Lemma}
\newtheorem{corollary}[theorem]{Corollary}
\newtheorem{conjecture}[theorem]{Conjecture}
\theoremstyle{remark}
\newtheorem{remark}[theorem]{Remark}

\title {Deformations and Rigidity of $\ell$-adic Sheaves
\thanks{I would like to thank P. Deligne, Y. Hu, N. Katz, W. Zheng and J. Xie for
their precious suggestions and comments. 
P. Deligne proves Lemma \ref{conj}. N. Katz clarifies 
the notion of rigidity used in this paper. J. Xie proves
Lemma \ref{xie}. I am also thankful for 
H. Esnault who invited me to visit Essen in 2005 
and suggested the problem studied in this paper. This research is supported by the NSFC.}}

\author {Lei Fu\\
{\small Yau Mathematical Sciences Center, Tsinghua University,
Beijing, China}\\
{\small leifu@mail.tsinghua.edu.cn}}

\date{}
\maketitle

\begin{abstract}
Let $X$ be a smooth connected projective curve over an algebraically
closed field,  and let $S$ be a finite nonempty closed subset in $X$. 
We study 
deformations of lisse $\overline{\mathbb F}_\ell$-sheaves on $X-S$. The universal deformation space is a
formal scheme. Its generic fiber has a rigid analytic space
structure. By a dimension counting of this rigid analytic space, we prove a
conjecture of Katz which says that a lisse irreducible $\overline{\mathbb
Q}_\ell$-sheaf $\mathcal F$ on $X-S$ is rigid if and only if  
$\mathrm{dim}\, H^1(X,j_\ast\mathcal End(\mathcal F))=2g$, where $j:X-S\to X$
is the open immersion, and $g$ is the genus of $X$. 

\noindent {\bf Key words:} deformation of Galois representations,
rigid analytic space, $\ell$-adic sheaf. 

\noindent {\bf Mathematics Subject Classification:} 14D15, 14G22.

\end{abstract}

\section*{Introduction}

In this paper, we work over an algebraically closed ground field $k$ of arbitrary
characteristic. Let $X$ be a smooth connected
projective curve over $k$, let $S$ be a nonempty finite closed subset of $X$,
and let $\ell$ be a prime number distinct from the characteristic of $k$. For any $s\in
S$, let $\eta_s$ be the generic point of the strict henselization of
$X$ at $s$. A lisse $\overline{\mathbb Q}_\ell$-sheaf $\mathcal F$
on $X-S$ is called {\it physically rigid} if for any lisse
$\overline{\mathbb Q}_\ell$-sheaf $\mathcal G$ on $X-S$ with the
property $\mathcal F|_{\eta_s}\cong \mathcal G|_{\eta_s}$ for all
$s\in S$, we have $\mathcal F\cong \mathcal G$. 
To get a good notion of physical rigidity, we have to
assume $X=\mathbb P^1$. Indeed, the abelian pro-$\ell$ quotient of the
\'etale fundamental group 
$\pi_1(X)$ of $X$ is isomorphic to $\mathbb Z_\ell^{2g}$, where $g$ is the genus 
of $X$. If $g\geq 1$, then 
there exists a character $\chi:\pi_1(X)\to \overline{\mathbb Q}_\ell^\ast$ such 
that $\chi^n$ are nontrivial for all $n$. So 
there exists a lisse $\overline{\mathbb Q}_\ell$-sheaf $\mathcal L$
of rank $1$ on $X$ such that $\mathcal L^{\otimes n}$ are nontrivial for all $n$. For any lisse $\overline{\mathbb Q}_\ell$-sheaf
$\mathcal F$ on $X-S$, the lisse sheaf $\mathcal G=\mathcal
F\otimes(\mathcal  L|_{X-S})$ is not isomorphic to $\mathcal F$ since they
have non-isomorphic determinant, but $\mathcal
F|_{\eta_s}\cong\mathcal G|_{\eta_s}$ for each $s\in X$ since $\mathcal L$ is lisse at $s$ and hence 
$\mathcal L|_{\eta_s}$ is trivial. Thus
$\mathcal F$ is not physically rigid. 
So we modify the concept of physical rigidity as follows. (Confer \cite[1.2.1]{K}). 
A lisse $\overline{\mathbb Q}_\ell$-sheaf $\mathcal F$ on $X-S$ is called \emph{rigid} if 
there exists a finite family $\mathcal F^{(j)}$ $(j\in J)$ of lisse
$\overline{\mathbb Q}_\ell$-sheaves on $X-S$ such that for any lisse $\overline{\mathbb Q}_\ell$-sheaf $\mathcal G$ on 
$X-S$ with the property  $$\mathrm{det}(\mathcal F)\cong\mathrm{det}(\mathcal G),\quad
\mathcal F|_{\eta_s}\cong \mathcal G|_{\eta_s}\hbox{ for all }
s\in S,$$ we have $\mathcal G\cong \mathcal F^{(j)}$ for some $j\in J$. 

In \cite[5.0.2]{K}, Katz shows that for an irreducible lisse $\overline{\mathbb Q}_\ell$-sheaf $\mathcal F$ on $\mathbb P^1-S$, if 
$H^1(\mathbb P^1, j_{\ast}\mathcal End(\mathcal F))=0,$ where 
$j:\mathbb P^1-S\hookrightarrow \mathbb P^1$ is the open immersion, then
$\mathcal F$ is physically rigid. Moreover, he proves (\cite[1.1.2]{K}) that if $k=\mathbb C$ is the 
complex number field and $\mathcal F$ is an irreducible physically rigid complex local system on 
$\mathbb P^1-S$, then we have $H^1(\mathbb P^1, j_{\ast}\mathcal End(\mathcal F))=0.$ 
In this paper, we work over
any field $k$ and any curve $X$. We prove the following, which is analogous to \cite[1.2.3]{K} in the complex 
ground field case.

\begin{theorem} \label{maintheorem} Let $X$ be a smooth projective curve over an algebraically
closed field $k$, let $S$ be a nonempty finite closed subset of $X$, and let $\mathcal F$
be a lisse $\overline{\mathbb Q}_\ell$-sheaf on $X-S$. Suppose 
that $\mathcal F$ is irreducible and rigid.
Then $$\mathrm{dim}\, H^1(X,j_\ast \mathcal End(\mathcal F))= 2g \hbox{ and }
H^1(X,j_\ast \mathcal End^{(0)}(\mathcal F))= 0,$$
where $j:X-S\hookrightarrow X$ is the 
open immersion, $\mathcal End^{(0)}(\mathcal F)$ is the subsheaf of $\mathcal End(\mathcal F)$ 
of trace $0$, and $g$ is the genus of $X$. 
\end{theorem}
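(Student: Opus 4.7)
The plan is to reduce both assertions to the single vanishing $H^1(X,j_\ast\mathcal End^{(0)}(\mathcal F))=0$ and then extract this from the dimension of the deformation space advertised in the abstract. The reduction is formal: because the rank of $\mathcal F$ is invertible in $\overline{\mathbb Q}_\ell$, the trace and scalar embedding split $\mathcal End(\mathcal F)\cong \mathcal End^{(0)}(\mathcal F)\oplus \overline{\mathbb Q}_\ell$ on $X-S$, and this splitting is inherited by $j_\ast$ on $X$. Since $\dim H^1(X,\overline{\mathbb Q}_\ell)=2g$, the trace-zero vanishing implies the full dimension equals $2g$.

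For the vanishing, I would fix an integral model of $\mathcal F$, reduce modulo $\ell$ to obtain a lisse $\overline{\mathbb F}_\ell$-sheaf $\bar{\mathcal F}$, and form the universal deformation formal scheme $\mathrm{Spf}(R)$ classifying lifts of $\bar{\mathcal F}$ to complete local Noetherian coefficient rings that preserve the determinant and the local isomorphism class at each $s\in S$. Standard deformation theory identifies the tangent space of $\mathrm{Spf}(R)$ at the closed point with $H^1(X,j_\ast\mathcal End^{(0)}(\bar{\mathcal F}))$, and a Nakayama-style comparison after inverting $\ell$ shows this matches $\dim H^1(X,j_\ast\mathcal End^{(0)}(\mathcal F))$.

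The key step is to exploit the rigid analytic generic fiber $\mathcal M$ of $\mathrm{Spf}(R)$: its $\overline{\mathbb Q}_\ell$-points parameterize lisse $\overline{\mathbb Q}_\ell$-sheaves close to $\mathcal F$ with the prescribed determinant and local behavior, modulo the scalar automorphisms of the irreducible $\mathcal F$. Rigidity of $\mathcal F$ says precisely that, up to isomorphism, only finitely many such sheaves exist globally, hence in particular in the formal/rigid neighborhood $\mathcal M$, which therefore has only finitely many closed points. Consequently $\dim \mathcal M=0$, and combining this with the identification of the tangent space yields $\dim H^1(X,j_\ast\mathcal End^{(0)}(\mathcal F))=0$ as required.

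The main obstacle will be the dimension comparison between $\mathcal M$ and the tangent space. The $H^1$ provides only an \emph{upper} bound on $\dim \mathcal M$, the gap being controlled by obstructions in $H^2(X,j_\ast\mathcal End^{(0)}(\bar{\mathcal F}))$; to conclude a matching lower bound one must establish that $R$ is formally smooth of the expected dimension (or equivalently that $\mathcal M$ is smooth) and that its geometric points all genuinely arise from lisse sheaves to which the rigidity hypothesis applies. Ruling out a non-reduced rigid point carrying positive-dimensional tangent directions, despite having a finite underlying set of isomorphism classes, is the delicate content where the formal-to-rigid dictionary must do real work.
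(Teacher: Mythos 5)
Your opening reduction to the trace-zero statement is correct, and the identification of the would-be tangent space with $H^1(X,j_\ast\mathcal End^{(0)})$ is the right computation \emph{if} the deformation functor you describe existed. But there it breaks down: the functor you propose --- deformations of $\bar{\mathcal F}$ with fixed determinant and with the local monodromy at each $s\in S$ kept in its isomorphism class, up to conjugacy --- is exactly the ``naive functor'' that the paper's closing Remark of \S 1 singles out, and the paper states that Schlessinger's criteria fail for it, so it is \emph{not} pro-representable and there is no $\mathrm{Spf}(R)$ or rigid generic fiber $\mathcal M$ to speak of. The paper is forced instead to work with \emph{framed} deformations: tuples $(\rho,(P_s)_{s\in S})$ where the $P_s$ witness the local isomorphisms, up to a uniform change of frame. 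That framed functor $F^\lambda$ is pro-representable (and smooth, once $r$ is invertible and $E$ contains the $r$-th roots of unity), but its rigid generic fiber $\mathfrak F^{\mathrm{rig}}$ is \emph{not} zero-dimensional: rigidity does not thin it out to finitely many points, because the parameter space still records the choice of local framings, which vary in a positive-dimensional family.

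This makes your central step --- ``rigidity $\Rightarrow$ finitely many closed points $\Rightarrow$ $\dim\mathcal M=0$'' --- inapplicable to the actual object one can construct. The paper instead introduces a second pro-representable functor $G$ of framing tuples, with rigid fiber $\mathfrak G^{\mathrm{rig}}$ of dimension $\sum_s\dim H^0(\eta_s,\mathcal End(\mathcal F_E|_{\eta_s}))-1$, proves (Deligne's Lemma \ref{conj}) that on a small admissible neighborhood $V$ of the base point every $\rho_t$ is conjugate to $\rho_{\mathcal O}$ by a matrix $\equiv I\pmod{\mathfrak m}$, hence $\mathfrak G^{\mathrm{rig}}\to V$ is surjective on points, and then invokes Xie's Lemma \ref{xie} to deduce $\dim V\le\dim\mathfrak G^{\mathrm{rig}}$. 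Combined with the tangent-space computation $\dim\widehat{\mathcal O}_{\mathfrak F^{\mathrm{rig}},t_0}=\dim H^1_c(X-S,\mathcal End(\mathcal F_E))-2g$ and the excision exact sequence relating $H^1_c$, the local $H^0(\eta_s,\cdot)$, and $H^1(X,j_\ast\mathcal End(\mathcal F_E))$, this yields $\dim H^1(X,j_\ast\mathcal End(\mathcal F_E))\le 2g$ and hence the vanishing. The ``main obstacle'' you flagged --- formal smoothness, so that the tangent space genuinely computes the dimension --- is real and is addressed by the paper (Lemma \ref{fiber}(ii)); but it is downstream of the pro-representability problem, and fixing it alone will not rescue the finite-point/dimension-zero argument once one replaces the naive functor by the framed one.
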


Katz's work for the complex field case can be
interpreted as a study of the moduli space of representations of the
topological fundamental group of $\mathbb P^1-S$. In \cite[Theorem 4.10]{BE},
Bloch and Esnault study deformations of locally free $\mathcal
O_{\mathbb P^1-S}$-modules provided with connections while keeping local
(formal) data undeformed, and prove that the universal deformation
space is algebraizable. Using this fact, they obtain a cohomological criterion 
for  physical rigidity for 
irreducible locally free
$\mathcal O_{\mathbb P^1-S}$-modules provided with connections. Our method is
similar. The moduli space of $\ell$-adic sheaves does not exist in the usual sense. 
We study deformations of lisse $\overline{\mathbb F}_\ell$-sheaves.
The universal deformation space is a formal scheme, and its generic
fiber is a rigid analytic space which can be used to produce
families of $\overline{\mathbb Q}_\ell$-sheaves. We use this rigid analytic space 
as a substitute for the moduli space of $\overline{\mathbb Q}_\ell$-sheaves. By a counting
argument on dimensions of rigid analytic spaces, we solve Katz's problem. 

We prove Theorem \ref{maintheorem} in \S 1. Let's prove the following corollary. 

\begin{corollary} Suppose that $\mathcal F$ is an irreducible rigid lisse $\overline{\mathbb Q}_\ell$-sheaf on $X-S$.

(i) In the case $g=0$, $\mathcal F$ is physically rigid if and only if $\mathrm{dim}\, H^1(X,j_\ast \mathcal End(\mathcal F))=0$.

(ii) In the case $g\geq 2$, $\mathcal F$ is rigid if and only if $\mathcal F$ is of rank 1. 

(iii) In the case $g=1$, the following conditions are equivalent:

\quad (1) $\mathcal F$ is rigid.

\quad (2)  $\mathrm{dim}\, H^1(X,j_\ast \mathcal End(\mathcal F))=2$.

\quad (3) If $\mathcal G$ is a lisse $\overline{\mathbb Q}_\ell$-sheaf on $X-S$ such that $\mathcal F|_{\eta_s}\cong\mathcal G|_{\eta_s}$ 
for all $s\in G$, then there a rank one $\overline{\mathbb Q}_\ell$-sheaf $\mathcal L$ lisse everywhere $X$ such that $\mathcal G\cong
\mathcal F\otimes j^*\mathcal L$. 
\end{corollary}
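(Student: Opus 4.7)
For (i), physical rigidity trivially implies rigidity (take the witnessing family to be $\{\mathcal F\}$), so Theorem \ref{maintheorem} with $g=0$ yields the forward direction; the converse is Katz's criterion \cite[5.0.2]{K}.

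For (ii), a rank-one sheaf coincides with its own determinant, so any $\mathcal G$ of rank one with $\det\mathcal G\cong\det\mathcal F$ is isomorphic to $\mathcal F$ and the singleton family $\{\mathcal F\}$ witnesses rigidity. Conversely, assume $\mathcal F$ is rigid of rank $r\geq 2$. Theorem \ref{maintheorem} gives $H^1(X,j_\ast\mathcal End^{(0)}(\mathcal F))=0$; since $\mathcal F$ is irreducible Schur's lemma forces $H^0=0$, and Poincar\'e duality (via self-duality of $\mathcal End^{(0)}(\mathcal F)$ under the trace pairing) gives $H^2=0$, so $\chi(X,j_\ast\mathcal End^{(0)}(\mathcal F))=0$. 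On the other hand, Grothendieck--Ogg--Shafarevich yields
$$\chi(X,j_\ast\mathcal End^{(0)}(\mathcal F))=(r^2-1)(2-2g)-\sum_{s\in S}a_s,$$
where $a_s\geq 0$ is the Artin conductor of $\mathcal End^{(0)}(\mathcal F)$ at $s$. For $g\geq 2$ and $r\geq 2$ the right side is strictly negative, a contradiction.

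For (iii), (1)$\Rightarrow$(2) is Theorem \ref{maintheorem} with $g=1$, and (3)$\Rightarrow$(1) follows because the $r$-torsion in the group of rank-one lisse $\overline{\mathbb Q}_\ell$-sheaves on the elliptic curve $X$ is finite, so the set $\{\mathcal F\otimes j^\ast\mathcal L:\mathcal L^{\otimes r}\cong\mathcal O_X\}$ is a finite rigidity family. The heart of the corollary is (2)$\Rightarrow$(3). First reduce to the case $\det\mathcal G\cong\det\mathcal F$: the rank-one sheaf $\det\mathcal G\otimes(\det\mathcal F)^{-1}$ has trivial restriction to each $\eta_s$, hence extends to a rank-one lisse sheaf $\mathcal M$ on $X$; by divisibility of $\overline{\mathbb Q}_\ell^\ast$ choose $\mathcal L_0$ with $\mathcal L_0^{\otimes r}\cong\mathcal M$ and replace $\mathcal G$ by $\mathcal G\otimes j^\ast\mathcal L_0^{-1}$ to match determinants.

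After the reduction, the trace decomposition $\mathcal End(\mathcal F)=\mathcal End^{(0)}(\mathcal F)\oplus\overline{\mathbb Q}_\ell$, hypothesis (2), and $\mathrm{dim}\,H^1(X,\overline{\mathbb Q}_\ell)=2g=2$ force $H^1(X,j_\ast\mathcal End^{(0)}(\mathcal F))=0$, so by the rigid analytic deformation theory of \S1 the space of deformations of $\mathcal F$ with fixed local data and fixed determinant is zero-dimensional at $[\mathcal F]$. Meanwhile, the twist map from the $2$-dimensional rigid analytic space of rank-one lisse sheaves on $X$ into the deformation space with only local data fixed is formally \'etale at $[\mathcal F]$: its tangent map is the injection $H^1(X,\overline{\mathbb Q}_\ell)\hookrightarrow H^1(X,j_\ast\mathcal End(\mathcal F))$ induced by the inclusion of scalars (injectivity coming from $H^0(X,j_\ast\mathcal End^{(0)}(\mathcal F))=0$), between two spaces of equal dimension $2$. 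The hardest step will be globalizing these infinitesimal statements to conclude that every $\mathcal G$ (with reduced data) is literally a twist of $\mathcal F$ by an $r$-torsion rank-one sheaf on $X$; the plan is to exploit the formal \'etaleness together with the finiteness and transitivity of the $r$-torsion twist action on the fixed-determinant moduli, both supplied by the \S1 framework.
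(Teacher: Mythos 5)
Your parts (i), (ii), and the implications (1)$\Rightarrow$(2) and (3)$\Rightarrow$(1) of (iii) are essentially correct and track the paper. For (ii) you run the Euler characteristic/Grothendieck--Ogg--Shafarevich count on $\mathcal End^{(0)}(\mathcal F)$ rather than $\mathcal End(\mathcal F)$; since $\mathcal End=\mathcal End^{(0)}\oplus\overline{\mathbb Q}_\ell$ and $H^1(X,\overline{\mathbb Q}_\ell)$ has dimension $2g$, the two bookkeepings are equivalent and your version is fine. For (3)$\Rightarrow$(1) the finiteness of the $r$-torsion twists is exactly the paper's point, via $\mathrm{Hom}(\pi_1(X),\mu_r)\cong H^1(X,\mu_r)$ being finite.

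The genuine gap is in (2)$\Rightarrow$(3). The paper's argument is short and concrete: with $\mathrm{dim}\,H^1(X,j_\ast\mathcal End(\mathcal F))=2$ one has $\chi(X,j_\ast\mathcal End(\mathcal F))=0=2-2g$, and then the Grothendieck--Ogg--Shafarevich identity forces every local defect term $r^2-\mathrm{dim}(j_\ast\mathcal End(\mathcal F))_{\bar s}+\mathrm{sw}_s(\mathcal End(\mathcal F))$ to vanish, i.e.\ $j_\ast\mathcal End(\mathcal F)$ is lisse on all of $X$; once this is known, the statement follows from Katz's argument for (6)$\Rightarrow$(1) in \cite[1.3.1]{K}, applied to $\mathcal Hom(\mathcal F,\mathcal G)$ (which is lisse at each $s$ because $\mathcal F|_{\eta_s}\cong\mathcal G|_{\eta_s}$, so its local monodromy agrees with that of $\mathcal End(\mathcal F)$). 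You never extract this lisseness of $\mathcal End(\mathcal F)$, which is the lever that makes the implication work. Instead you set up a deformation-theoretic ``formal \'etaleness'' argument and then openly defer the essential step (``the hardest step will be globalizing these infinitesimal statements\ldots''). Beyond being incomplete, that route has a scoping problem: the deformation functors of \S 1 are formal/infinitesimal---they parametrize lifts of a fixed residual representation and the associated rigid space only sees sheaves ``close to'' $\mathcal F$---whereas (iii)(3) is a statement about an arbitrary $\mathcal G$ agreeing with $\mathcal F$ only at the $\eta_s$, which need not lie in any small neighborhood of $[\mathcal F]$ (it need not even have the same residual representation). The finiteness and transitivity of the torsion-twist action that you invoke at the end are precisely what would need to be proven and are not supplied by \S 1 as stated. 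You should replace this sketch with the Euler characteristic computation and the reduction to \cite[1.3.1]{K}.
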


\begin{proof}   (i) follows directly from Theorem \ref{maintheorem} and 
\cite[5.0.2]{K}. 

(ii) Using the definition of rigidity, one verifies directly that any rank 1 sheaf is rigid. 
Suppose $\mathcal F$ is rigid of rank $r$. By Theorem
\ref{maintheorem}, we have $\mathrm{dim}\, H^1(X, j_*\mathcal End(\mathcal F))=2g$. 
Since $\mathcal F$ is irreducible, the dimension of 
$H^0(X, j_*\mathcal End(\mathcal F))\cong \mathrm{End}(\mathcal F)$
is $1$ by Schur's lemma. Then by the Poincar\'e duality (\cite[1.3 and 2.2]{Ddualite}), we have 
$\mathrm{dim}\, H^2(X, j_*\mathcal End(\mathcal F))=1.$ So we have 
$
\chi(X,  j_*\mathcal End(\mathcal F))= 
2-2g.$
Combined with the Grothendieck-Ogg-Shafarevich formula, we get
\begin{eqnarray*}
(2-2g)(1-r^2)&=& \chi(X,  j_*\mathcal End(\mathcal F))-(2-2g)r^2\\
&=&-\sum_{s\in S}\Big(r^2-\mathrm{dim}(j_*\mathcal End(\mathcal F))_{\bar s}
+\mathrm{sw}_s(\mathcal End(\mathcal F))\Big),
\end{eqnarray*} where $\mathrm{sw}_s$ is the Swan conductor at $s$. 
We have $r^2-\mathrm{dim}\, (j_*\mathcal End(\mathcal F))_{\bar s}\geq 0$ and $\mathrm{sw}_s(\mathcal End(\mathcal F))\geq 0$. 
It follows that $(2-2g)(1-r^2)\leq 0$. If $g\geq 2$, this last inequality implies that $r=1$.

(iii)  (1)$\Rightarrow$(2) Follows from Theorem
\ref{maintheorem} for the case $g=1$. 

(2)$\Rightarrow$(3) The calculation in the proof of (ii)  shows that in the case $g=1$,  
we have $$-\sum_{s\in S}\Big(r^2-\mathrm{dim}\, (j_*\mathcal End(\mathcal F))_{\bar s}
+\mathrm{sw}_s(\mathcal End(\mathcal F))\Big)=0.$$
But we have $r^2-\mathrm{dim}\, (j_*\mathcal End(\mathcal F))_{\bar s}\geq 0$
and $\mathrm{sw}_s(\mathcal End(\mathcal F))\geq 0$. So the equalities must hold. 
This shows that $j_*\mathcal End(\mathcal F)$ must be lisse at each $s\in S$. We then use the same the proof of 
(6)$\Rightarrow$(1) of \cite[1.3.1]{K}

(3)$\Rightarrow$(1) Let $\mathcal G$ be a lisse $\overline{\mathbb Q}_\ell$-sheaf on $X-S$ such that 
$\mathrm{det}(\mathcal F)\cong \mathrm{det}(\mathcal G)$ and $\mathcal F|_{\eta_s}\cong\mathcal G|_{\eta_s}$ 
for all $s\in S$. By condition (iii), we have $\mathcal G\cong
\mathcal F\otimes j^*\mathcal L$ for some rank one $\overline{\mathbb Q}_\ell$-sheaf $\mathcal L$ lisse everywhere $X$.
Since $\mathrm{det}(\mathcal F)\cong \mathrm{det}(\mathcal G)$, we must have $\mathcal L^{\otimes r}\cong \overline{\mathbb Q}_\ell$. 
Let's prove that there are only finitely many such sheaves $\mathcal L$ (up to isomorphism). 
It suffice to show that there are only finitely many characters $\chi:\pi_1(X,\bar\eta)\to\
\overline{\mathbb Q}_\ell^\ast$ with the property $\chi^r=1$. Indeed, if $\mu_r$ is the group of 
$r$-th roots of unity in $\overline{\mathbb Q}_\ell$, we have 
$\mathrm{Hom}(\pi_1(X,\bar\eta), \mu_r)\cong H^1(X,\mu_r)$
by \cite[XI 5]{SGA1}, and $H^1(X,\mu_r)$ is finite by \cite[XIV 1.2]{SGA4}.
\end{proof}

\section{Proof of Theorem \ref{maintheorem}}

In this section, we prove Theorem \ref{maintheorem} except for the technical Lemmas 
\ref{fiber},  \ref{conj} and \ref{xie}.
Their proofs are left to later sections. 

In the following, we take $\Lambda$ to be either a finite extension
$E$ of $\mathbb Q_\ell$, or the integer ring $\mathcal O$ of such
$E$. Let $\mathfrak
m$ be the maximal ideal of $\Lambda$, and let
$\kappa_\Lambda=\Lambda/\mathfrak m$ be the residue field of $\Lambda$. Denote by $\mathcal C_\Lambda$ the
category of Artinian local $\Lambda$-algebras with same residue
field $\kappa_\Lambda$ as $\Lambda$. Morphisms in $\mathcal C_{\Lambda}$ are
homomorphisms of $\Lambda$-algebras. If $A$ is an object in $\mathcal
C_\Lambda$, we denote by $\mathfrak m_A$ the maximal ideal of $A$.
Let $\eta$ be the generic point of $X$, and let $\bar\eta$ and $\bar\eta_s$ be geometric points over 
$\eta$ and $\eta_s$, respectively. Choose morphisms $\bar\eta_s\to \bar\eta$ so that the diagrams
$$\begin{array}{ccc}
\bar\eta_s&\to&\bar\eta\\
\downarrow&&\downarrow\\
\eta_s&\to&\eta
\end{array}$$ 
commute for all $s\in S$. 
They induce  canonical homomorphisms $\mathrm{Gal}(\bar\eta_s/\eta_s)\to \pi_1(X-S, \bar\eta)$, where
$\pi_1(X-S,\bar\eta)$ is the \'etale fundamental group of $X-S$. A homomorphism
$\rho:\pi_1(X-S,\bar\eta)\to \mathrm{GL}(A^r)$ is called a
representation if it is continuous. Here, if $\Lambda=\mathcal O$, then $A$ is finite and we put the discrete topology on
$\mathrm{GL}(A^r)$. If $\Lambda=E$, then $A$ is a finite dimensional
vector space over $E$, and we endow $\mathrm{GL}(A^r)$ with the topology induced from the
$\ell$-adic topology on $A$. Denote by $\rho|_{\mathrm{Gal}(\bar\eta_s/\eta_s)}$ the composite 
$$\mathrm{Gal}(\bar\eta_s/\eta_s)\to \pi_1(X-S, \bar\eta)\stackrel{\rho}\to \mathrm{GL}(A^r).$$ 

Suppose we are given a (continuous) representation
$\rho_\Lambda:\pi_1(X-S,\bar\eta)\to \mathrm{GL}(\Lambda^r)$. Let
$\rho_0:\pi_1(X-S,\bar\eta)\to \mathrm{GL}(\kappa_\Lambda^r)$ be the
representation obtained from $\rho_\Lambda$ by passing to residue
field. (If $\Lambda=E$, then $\rho_0$ coincides with $\rho_\Lambda$.)
We study deformations of $\rho_0$. Our treatment is similar
to Mazur's theory of deformations of Galois representations
(\cite{M}) and Kisin's theory of framed deformations of Galois
representations (\cite{Kisin}).
Suppose we
are given $P_{0,s}\in\mathrm{GL}(\kappa_\Lambda^r)$ for each $s\in S$ with the property
$$P_{0,s}^{-1}\rho_0|_{\mathrm{Gal}(\bar\eta_s/\eta_s)}P_{0,s}=\rho_0|_{\mathrm{Gal}(\bar\eta_s/\eta_s)}.$$ 
In application, we often take $P_{0,s}$ to be the identity matrix $I$
for all $s\in S$. In this case, we denote the data $(\rho_0,
(P_{0,s})_{s\in S})$ by $(\rho_0, (I)_{s\in S})$. For any
$A\in\mathrm{ob}\,\mathcal C_\Lambda$, denote the composite
$$\pi_1(X-S,\bar\eta)\stackrel{\rho_\Lambda}\to\mathrm{GL}(\Lambda^r) 
\to\mathrm{GL}(A^r)$$ also by $\rho_\Lambda$. Let $\lambda=\mathrm{det}(\rho_\Lambda)$.
Define $F^\lambda(A)$ to be the set of 
deformations $(\rho,(P_s)_{s\in S})$ of the data $(\rho_0,
(P_{0,s})_{s\in S})$ with $\mathrm{det}(\rho)=\lambda$ and with the prescribed local monodromy 
$\rho_{\Lambda}|_{\mathrm{Gal}(\bar\eta_s/\eta_s)}$. More precisely, we define it to be the set
of equivalent classes
\begin{eqnarray*}
F^\lambda(A)=\{(\rho,(P_s)_{s\in S})&|&\rho:\pi_1(X-S,
\bar\eta)\to\mathrm{GL}(A^r) \hbox{ is a continuous representation},\; P_s\in \mathrm{GL}(A^r), \\
&&\rho\mod\mathfrak m_A=\rho_0, \quad P_s \mod \mathfrak m_A=P_{0,s},\\
&&\mathrm{det}(\rho)=\lambda, \quad P_s^{-1}\rho|_{\mathrm{Gal}(\bar\eta_s/\eta_s)} P_s=\rho_\Lambda|_{\mathrm{Gal}
(\bar\eta_s/\eta_s)}\hbox{ for all } s\in S\}/ \sim,
\end{eqnarray*}
where two tuples $(\rho^{(i)},(P^{(i)}_s)_{s\in S})$ $(i=1,2)$ are
equivalent if there exists $P\in\mathrm{GL}(A^r)$ such that
$$(\rho^{(1)},(P^{(1)}_s)_{s\in S})=(P^{-1}\rho^{(2)}P,(P^{-1}P^{(2)}_s)_{s\in
S}).$$  Note that the equation $P^{(1)}_s= P^{-1}P^{(2)}_s$ implies
that $P\equiv I\mod \mathfrak m_A$ since we assume $S$ is nonempty
and $P^{(1)}_s\equiv P^{(2)}_s\mod \mathfrak m_A=P_{0,s}$. 
Using the Schlessinger criteria
\cite[Theorem 2.11]{S}, one can show the
functor $F^\lambda$ is pro-representable. 

\begin{lemma} \label{fiber}
Let $\kappa_\Lambda[\epsilon]$ be the ring of dual numbers. The
$\kappa_\Lambda$-vector space $F^\lambda(\kappa_\Lambda[\epsilon])$ is finite dimensional. 

(i) Suppose that 
$r$ is invertible in $\kappa_\Lambda$. 
We have  $$\mathrm{dim}\,F^\lambda (\kappa_\Lambda[\epsilon]) =\mathrm{dim}\, H_c^1(X-S,\mathcal
End({\mathcal F_0}))-2g,$$ where $\mathcal F_0$ is the lisse $\kappa_\Lambda$-sheaf on $X-S$ corresponding to 
the representation $\rho_0$.  

(ii) Suppose $\mathrm{End}_{\pi_1(X-S,\bar\eta)}(\kappa_\Lambda^r)(\cong \mathrm{End}(\mathcal F_0))$
consists of scalar multiplications, where $\kappa_\Lambda^r$ is considered as a
$\pi_1(X-S,\bar\eta)$-module through the representation $\rho_0$.  Suppose furthermore that 
$r$ is invertible in $\kappa_\Lambda$. 
Then the functor $F^\lambda$ is smooth.  
\end{lemma}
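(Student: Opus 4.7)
My plan is to convert both statements into cocycle calculations and reduce them to standard cohomological identities on the proper curve $X$. A deformation to $\kappa_\Lambda[\epsilon]$ (or a lift across a small extension) can be written $\rho = (I + \epsilon c)\rho_0$ and $P_s = (I + \epsilon X_s)P_{0,s}$. The cocycle identity forces $c$ to be a $1$-cocycle in $Z^1(\pi_1(X-S, \bar\eta), \mathcal{End}(\mathcal{F}_0))$ with $\pi_1$ acting by $\mathrm{Ad}\,\rho_0$; the determinant condition $\det\rho=\lambda$ translates to $\mathrm{tr}\,c\equiv 0$, confining the values of $c$ to the traceless subsheaf $\mathcal{End}^{(0)}(\mathcal{F}_0)$; the local framing equation becomes $c|_{\mathrm{Gal}(\bar\eta_s/\eta_s)} + d X_s = 0$; and the equivalence relation translates to $(c,(X_s)) \sim (c + dY, (X_s - Y))$ for $Y \in \mathcal{End}(\mathcal{F}_0)_{\bar\eta}$, with the $Y$-action free because $S \ne \emptyset$.

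For (i), this identifies $F^\lambda(\kappa_\Lambda[\epsilon])$ with the middle cohomology of the three-term complex
\[
\mathcal{End}(\mathcal{F}_0)_{\bar\eta} \longrightarrow Z^1(\pi_1(X-S), \mathcal{End}^{(0)}(\mathcal{F}_0)) \oplus \bigoplus_{s\in S} \mathcal{End}(\mathcal{F}_0)|_{\eta_s} \longrightarrow \bigoplus_{s\in S} Z^1(\mathrm{Gal}(\bar\eta_s/\eta_s), \mathcal{End}(\mathcal{F}_0)|_{\eta_s}),
\]
which is finite dimensional by finiteness of $\ell$-adic cohomology. Writing $\mathcal{G}=\mathcal{End}(\mathcal{F}_0)$, I would compute the dimension by combining the Leray-type sequence $0 \to H^1(X, j_\ast \mathcal{G}) \to H^1(X-S, \mathcal{G}) \to \bigoplus_s H^1(\mathrm{Gal}(\bar\eta_s/\eta_s), \mathcal{G}|_{\eta_s})$ with the exact sequence $0 \to H^0(X, j_\ast \mathcal{G}) \to \bigoplus_s (\mathcal{G}|_{\eta_s})^{\mathrm{Gal}(\bar\eta_s/\eta_s)} \to H^1_c(X-S, \mathcal{G}) \to H^1(X, j_\ast \mathcal{G}) \to 0$ coming from the skyscraper quotient $j_\ast \mathcal{G}/j_!\mathcal{G}$. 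The splitting $\mathcal{G} = \mathcal{End}^{(0)}(\mathcal{F}_0) \oplus \underline{\kappa_\Lambda}$ (valid because $r$ is invertible) together with $\dim H^1_c(X-S, \underline{\kappa_\Lambda}) = 2g + |S| - 1$ will account for the discrepancy $2g$ between the traceless and full $H^1_c$'s and yield the advertised identity.

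For (ii), I would verify the formal smoothness criterion directly: for every small extension $A' \twoheadrightarrow A$ in $\mathcal{C}_\Lambda$ with square-zero kernel $I \cong \kappa_\Lambda$, every $(\rho,(P_s)) \in F^\lambda(A)$ lifts to $F^\lambda(A')$. Artin vanishing ($\mathrm{cd}_\ell(X-S) \le 1$ for an affine curve over an algebraically closed field) kills $H^2(\pi_1(X-S), \mathfrak{gl}_r(\kappa_\Lambda))$, so $\rho$ extends to some $\tilde\rho$. Using the hypotheses on $r$, I would rescale $\tilde\rho$ by a scalar character $\chi$ solving $\chi^r = \lambda|_{A'}/\det\tilde\rho$, solvable in $1 + I$ because multiplication by $r$ is an isomorphism on the additive $\kappa_\Lambda$-module $I$, to achieve $\det\tilde\rho = \lambda|_{A'}$. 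For each $s$, an arbitrary lift $\tilde P_s^{(0)}$ of $P_s$ produces a local discrepancy class $[\alpha_s] \in H^1(\mathrm{Gal}(\bar\eta_s/\eta_s), \mathcal{End}^{(0)}(\mathcal{F}_0)|_{\eta_s})$; a further correction of $\tilde\rho$ by $1 + \beta$ with $\beta \in Z^1(\pi_1, \mathcal{End}^{(0)}(\mathcal{F}_0))$ shifts $[\alpha_s]$ by the restriction $[\beta|_{\mathrm{Gal}(\bar\eta_s/\eta_s)}]$. Hence the lift exists once the restriction map $H^1(X-S, \mathcal{End}^{(0)}(\mathcal{F}_0)) \to \bigoplus_s H^1(\mathrm{Gal}(\bar\eta_s/\eta_s), \mathcal{End}^{(0)}(\mathcal{F}_0)|_{\eta_s})$ is surjective, whose cokernel injects into $H^2(X, j_\ast \mathcal{End}^{(0)}(\mathcal{F}_0))$. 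By Poincar\'e duality on $X$, with the trace pairing self-dualizing $\mathcal{End}^{(0)}(\mathcal{F}_0)$ (again by $r$-invertibility), this $H^2$ is dual to $H^0(X, j_\ast \mathcal{End}^{(0)}(\mathcal{F}_0)) = \mathrm{End}^{(0)}_{\pi_1}(\mathcal{F}_0)$, which vanishes by the Schur hypothesis because the only traceless scalar is $0$.

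The principal obstacle will be the cocycle bookkeeping in (ii): carefully tracking how a global correction of $\tilde\rho$ by $1 + \beta$ interacts with independent local reframings $\tilde P_s \mapsto (I + X_s) \tilde P_s$, and confirming that the combined shift of each local discrepancy $[\alpha_s]$ is exactly the restriction $[\beta|_{\mathrm{Gal}(\bar\eta_s/\eta_s)}]$ modulo coboundaries, so that the surjectivity-via-vanishing argument applies cleanly. A parallel but easier subtlety in (i) is reconciling the torsor structure on the $X_s$'s (a torsor over $(\mathcal{End}(\mathcal{F}_0)|_{\eta_s})^{\mathrm{Gal}(\bar\eta_s/\eta_s)}$) with the skyscraper term $\bigoplus_s (\mathcal{G}|_{\eta_s})^{\mathrm{Gal}(\bar\eta_s/\eta_s)}$ in the $H^1_c$ sequence.
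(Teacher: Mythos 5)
Your part (i) is essentially the paper's argument (Proposition 2.1), just repackaged: you realize $F^\lambda(\kappa_\Lambda[\epsilon])$ as the middle cohomology of a three-term complex, whereas the paper tracks the same data through a chain of dimension counts on cocycles and coboundaries; both then split $\mathcal{E}nd = \mathcal{E}nd^{(0)} \oplus \underline{\kappa_\Lambda}$ and use $\dim H^1_c(X-S,\underline{\kappa_\Lambda}) = 2g + |S| - 1$ together with the $j_! \to Rj_\ast \to \Delta$ triangle to get the formula. That part is a faithful reorganization, not a different proof.

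Part (ii) is a genuinely different route. The paper corrects the local framings \emph{first}: it shows the obstruction to lifting $(\rho,(P_s))$ with the prescribed local data lives in $H^2(X, j_\ast\mathcal{E}nd(\mathcal F_0))$, that the trace map identifies this with $H^2(X,\underline{\kappa_\Lambda})$ by Schur plus Poincar\'e duality, and that the obstruction class maps to the (vanishing) obstruction to lifting $\det\rho$. Having matched all the local framings, $\lambda^{-1}\det\rho'$ becomes an \emph{everywhere unramified} character, i.e.\ a character of $\pi_1(X,\bar\eta)$, and the paper extracts an $r$-th root through $\pi_1(X)^{\mathrm{ab},\ell}\cong\mathbb Z_\ell^{2g}$ to fix the determinant. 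You instead fix the determinant \emph{first}, by extracting the $r$-th root of $\lambda^{-1}\det\tilde\rho$ directly as a character of $\pi_1(X-S,\bar\eta)$ valued in $1+I$, which works because $I$ is a $\kappa_\Lambda$-vector space and $r$ is invertible (no need to descend to $\pi_1(X)$ or through the $\mathbb Z_\ell^{2g}$ structure). Once $\det$ is pinned, the local discrepancy classes $[\alpha_s]$ are automatically traceless, the available corrections $\beta$ are traceless cocycles, and the obstruction lands in the cokernel of $H^1(X-S,\mathcal{E}nd^{(0)}(\mathcal F_0)) \to \bigoplus_s H^1(\eta_s,\mathcal{E}nd^{(0)}(\mathcal F_0)|_{\eta_s})$, which injects into $H^2(X,j_\ast\mathcal{E}nd^{(0)}(\mathcal F_0))$; by Poincar\'e duality and Schur this group is $0$ outright. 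Your order buys a cleaner vanishing (the relevant $H^2$ is literally zero, so there is no need to compare the obstruction class to the determinant obstruction via the trace isomorphism), at the cost of the extra cocycle bookkeeping you flag, which does check out: conjugation by $\tilde P_s$ acts trivially on $\mathfrak a$-valued endomorphisms since $\mathfrak m_{A'}\mathfrak a = 0$, so the shift of $[\alpha_s]$ by a global traceless $\beta$ is exactly the restriction $[\beta|_{\mathrm{Gal}(\bar\eta_s/\eta_s)}]$, and the coboundaries coming from reframing lie in $B^1(\mathrm{Gal}_s,\mathcal{E}nd^{(0)})$ because $d(\mathrm{Id})=0$. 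One small remark: your $r$-th-root step visibly uses only invertibility of $r$, not the hypothesis that $\kappa_\Lambda$ contains the $r$-th roots of unity; that hypothesis appears in the paper's version of the twist through $\pi_1(X)$, though it is not needed for the core argument in either form.
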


We will prove lemma \ref{fiber} in \S 2. 
Denote by $R(\rho_\Lambda)$ the universal
deformation ring for the functor $F^\lambda$. It is a complete noetherian local
$\Lambda$-algebra with residue field $\kappa_\Lambda$. We have a 
homomorphism
$$\rho_{\mathrm{univ}}:\pi_1(X-S,\bar\eta)\to\mathrm{GL}(r, R(\rho_\Lambda))$$
with the properties $$\rho_{\mathrm{univ}}\mod \mathfrak
m_{R(\rho_\Lambda)}=\rho_0,\quad \mathrm{det}(\rho_{\mathrm{univ}})=\lambda$$ and we have matrices $P_{\mathrm{univ},s}\in
\mathrm{GL}(r, R(\rho_\Lambda))$ with the properties
$$P_{\mathrm{univ},s}\mod \mathfrak
m_{R(\rho_\Lambda)}=P_{0,s},\quad P_{\mathrm{univ},s}^{-1}
\rho_{\mathrm{univ}}|_{\mathrm{Gal}(\bar\eta_s/\eta_s)} P_{\mathrm{univ},s}=\rho_\Lambda|_{\mathrm{Gal}
(\bar\eta_s/\eta_s)}$$ such that the homomorphism
$\pi_1(X-S,\bar\eta)\to\mathrm{GL}(r,R(\rho_\Lambda)/\mathfrak
m^i_{R(\rho_\Lambda)})$ induced by $\rho_{\mathrm{univ}}$ are
continuous for all positive integers $i$. It has the following universal property.

\begin{proposition} \label{baseextension} Let $A'$ be a local Artinian $\Lambda$-algebra so that its residue 
field $\kappa'=A'/\mathfrak m_{A'}$ is a finite extension of $\kappa_\Lambda=\Lambda/\mathfrak m$. Let $(\rho',(P'_s)_{s\in S})$
be a tuple such that $\rho':\pi_1(X-S,
\bar\eta)\to\mathrm{GL}({A'}^r)$ is a representation and $P'_s\in \mathrm{GL}(A'^r)$ with the properties
$$\rho'\mod\mathfrak m_{A'}=\rho_0, \quad P'_s \mod \mathfrak m_{A'}=P_{0,s},\quad \mathrm{det}(\rho')=\lambda,\quad
P_s^{\prime -1}\rho|_{\mathrm{Gal}(\bar\eta_s/\eta_s)} P'_s=\rho_\Lambda|_{\mathrm{Gal}
(\bar\eta_s/\eta_s)}\hbox{ for all } s\in S.$$ 
Then there exists a unique local
$\Lambda$-algebra homomorphism $R(\rho_\Lambda)\to A'$ which brings
$(\rho_{\mathrm{univ}},(P_{\mathrm{univ},s})_{s\in S}) $ to a tuple equivalent to 
$(\rho',(P'_s)_{s\in S})$, where the equivalence relation is defined as before.  
\end{proposition}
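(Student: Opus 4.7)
The proposition extends the defining universal property of $R(\rho_\Lambda)$ from Artin local $\Lambda$-algebras with residue field $\kappa_\Lambda$ (the category $\mathcal C_\Lambda$ on which $F^\lambda$ was pro-represented) to those whose residue field $\kappa'$ is only a finite extension of $\kappa_\Lambda$. The plan is to enlarge the coefficients from $\Lambda$ to a complete local $\Lambda$-algebra $\Lambda'$ with residue field $\kappa'$: take $\Lambda' = \kappa'$ when $\Lambda = E$, and take $\Lambda'$ to be the unique finite unramified extension of $\mathcal O$ with residue field $\kappa'$ when $\Lambda = \mathcal O$. In both cases $\Lambda'/\Lambda$ is formally \'etale, so the inclusion $\kappa_\Lambda \hookrightarrow \kappa' = A'/\mathfrak m_{A'}$ admits a unique lift to a local $\Lambda$-algebra homomorphism $\Lambda' \to A'$, making $A'$ an object of $\mathcal C_{\Lambda'}$.

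Next I would rerun the Schlessinger-style argument on $\mathcal C_{\Lambda'}$, starting from the base-changed initial data $(\rho_0 \otimes_{\kappa_\Lambda} \kappa', (P_{0,s}))$ and reference representation $\rho_{\Lambda'} := \rho_\Lambda \otimes_\Lambda \Lambda'$, to obtain a pro-representing ring $R'$ for the corresponding functor $F^\lambda_{\Lambda'}$. A comparison of tangent and obstruction spaces, using the cohomological description furnished by Lemma \ref{fiber} and the compatibility of \'etale cohomology with the flat base change $\kappa_\Lambda \to \kappa'$, then identifies $R' \cong R(\rho_\Lambda)\, \hat\otimes_\Lambda \Lambda'$, with universal tuple the $\Lambda'$-linear extension of $(\rho_{\mathrm{univ}}, (P_{\mathrm{univ},s}))$. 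The given datum $(\rho', (P'_s)) \in F^\lambda_{\Lambda'}(A')$ then corresponds, via this universal property, to a unique local $\Lambda'$-algebra homomorphism $R(\rho_\Lambda)\, \hat\otimes_\Lambda \Lambda' \to A'$; pre-composing with the canonical map $R(\rho_\Lambda) \to R(\rho_\Lambda)\, \hat\otimes_\Lambda \Lambda'$ yields the required $\Lambda$-algebra homomorphism.

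For uniqueness, I would observe that any local $\Lambda$-algebra map $R(\rho_\Lambda) \to A'$ factors uniquely as a local $\Lambda'$-algebra map through $R(\rho_\Lambda)\, \hat\otimes_\Lambda \Lambda'$, since $A'$ carries a canonical $\Lambda'$-structure by the first paragraph; uniqueness of the $\Lambda'$-algebra map then follows from uniqueness in the universal property of $R'$. The main obstacle in this plan is the identification $R' \cong R(\rho_\Lambda)\, \hat\otimes_\Lambda \Lambda'$: one must check carefully that the framings $P_s$ and the determinant constraint $\mathrm{det}(\rho) = \lambda$ behave well under $\otimes_\Lambda \Lambda'$, and that the tangent and obstruction computations genuinely commute with finite \'etale base change of the coefficient ring. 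Everything else reduces to formal Hensel / formally-\'etale lifting of scalars.
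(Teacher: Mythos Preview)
Your approach is correct in outline, but the paper takes a much simpler route that avoids base-changing the coefficient ring entirely.

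The paper observes that the hypothesis is not merely $\rho'\bmod\mathfrak m_{A'}=\rho_0\otimes_{\kappa_\Lambda}\kappa'$, but literally $\rho'\bmod\mathfrak m_{A'}=\rho_0$ and $P'_s\bmod\mathfrak m_{A'}=P_{0,s}$, with $\rho_0,P_{0,s}$ having entries in $\kappa_\Lambda$. Hence every matrix entry of $\rho'(g)$ and $P'_s$ lies in the subring
\[
A \;=\; \{\,a\in A' : a\bmod\mathfrak m_{A'}\in\kappa_\Lambda\,\}\subset A',
\]
the preimage of $\kappa_\Lambda$ under $A'\twoheadrightarrow\kappa'$. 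This $A$ is an Artinian local $\Lambda$-algebra with maximal ideal $\mathfrak m_{A'}$ and residue field $\kappa_\Lambda$, i.e.\ an object of $\mathcal C_\Lambda$; the tuple $(\rho',(P'_s))$ already defines an element of $F^\lambda(A)$, and the universal property of $R(\rho_\Lambda)$ applies directly. Uniqueness follows because any local $\Lambda$-homomorphism $R(\rho_\Lambda)\to A'$ automatically lands in $A$ (its residue field maps into $\kappa_\Lambda$).

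By contrast, your plan enlarges $\Lambda$ to $\Lambda'$, reproves pro-representability over $\mathcal C_{\Lambda'}$, and then needs the base-change identification $R'\cong R(\rho_\Lambda)\,\hat\otimes_\Lambda\Lambda'$, which you correctly flag as the nontrivial step. That identification is true (and is the standard way to handle residue-field extension in deformation theory when the residual representation is genuinely only defined over $\kappa'$), but here it is unnecessary: the specific hypotheses already force all the data down to a $\mathcal C_\Lambda$-object. The paper's preimage trick is shorter and sidesteps the tangent/obstruction comparison entirely; your approach would still work, but buys generality that is not needed for this proposition.
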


\begin{proof} When $\kappa'=\kappa$, this follows from the definition of the universal deformation ring. 
In general, let $A$ be the inverse image of $\kappa_\Lambda$ under the projection $A'\to A'/\mathfrak m_{A'}=\kappa'$. 
Then $A$ is an object in $\mathcal C_{\Lambda}$ and the tuple $(\rho, (P_s)_{s\in S})$ defines an element in $F^\lambda(A)$. 
We then apply the universal property of $R(\rho_A)$. 
\end{proof}

Let $E$ be a finite extension of $\mathbb Q_\ell$, let $\mathcal O$
be the integer ring of $E$, and let $\kappa$ be the residue field of $\mathcal O$.
Suppose $\mathcal F_E$ is a lisse $E$-sheaf on $X-S$ of rank
$r$. We say $\mathcal F_E$ is rigid if the corresponding $\overline{\mathbb Q}_\ell$-sheaf
$\mathcal F_E\otimes_E\overline{\mathbb Q}_\ell$ is rigid. 
Choose a torsion free lisse $\mathcal O$-sheaf $\mathcal
F_{\mathcal O}$ such that $\mathcal F_E\cong \mathcal F_{\mathcal
O}\otimes_{\mathcal O} E$. Let $\mathcal F_0= \mathcal F_{\mathcal
O}\otimes_{\mathcal O} \kappa$, let $\rho_{\mathcal O}:
\pi_1(X-S,\bar\eta)\to \mathrm{GL}(\mathcal O^r)$ be the
representation corresponding to the sheaf $\mathcal F_{\mathcal O}$,
let $\rho_E: \pi_1(X-S,\bar\eta)\to \mathrm{GL}(E^r)$ and
$\rho_0: \pi_1(X-S,\bar\eta)\to \mathrm{GL}(\kappa^r)$ be the
representations obtained from $\rho_{\mathcal O}$ by passing to the
fraction field and the residue field of $\mathcal O$, respectively, and let $\lambda=\mathrm{det}(\rho_{\mathcal O})$.
Take $\Lambda=\mathcal O$. 
Consider the universal deformation ring $R(\rho_{\mathcal O})$ of the 
functor $F^\lambda:\mathcal C_{\mathcal O}\to(\mathrm{Sets})$ for
the data $(\rho_0, (I)_{s\in S})$ where $P_{0,s}=I$ for all $s\in
S$.  As a local $\mathcal O$-algebra, $R(\rho_{\mathcal O})$ is isomorphic to a quotient of
$\mathcal O[[y_1,\ldots, y_n]]$ for some $n$. 

Let $D(0,1)$ be open unit disc considered as a rigid analytic space over $E$. 
Following Berthelot, we associate to $\mathcal O[[y_1,\ldots, y_n]]$
the rigid analytic space  $D(0,1)^n$. If $R$ is the quotient of 
$\mathcal O[[y_1,\ldots, y_n]]$ by an ideal generated by $g_1,\ldots,
g_k\in \mathcal O[[y_1,\ldots, y_n]]$, we associate to $R$ the closed
analytic subvariety $g_1=\ldots=g_k=0$ of $D(0,1)^n$. This rigid analytic space
can be thought as the generic fiber of
the formal scheme $\mathrm{Spf}\,R$ over $\mathrm{Spf}\,\mathcal O$. We refer the
reader to \cite[\S1]{Berk} and \cite[\S7]{dJ} for details of
Berthelot's construction.

Let $\mathfrak F^{\mathrm{rig}}$ be the rigid analytic space associated to  
the universal deformation ring $R(\rho_{\mathcal O})$ of the functor $F^\lambda:\mathcal C_{\mathcal O}
\to(\mathrm{Sets})$
for the data $(\rho_0,(I)_{s\in S})$.
By \cite[7.1.10]{dJ}, there is a one-to-one correspondence between 
the set of point in $\mathfrak F^{\mathrm{rig}}$ and the set of equivalent classes
of local homomorphisms 
$R(\rho_{\mathcal O})\to \mathcal O'$ of $\mathcal O$-algebras, where $\mathcal O'$ is the
integer ring of a finite extension $E'$ of $E$, and two such homomorphisms
$R(\rho_{\mathcal O})\to \mathcal O'$ and $R(\rho_{\mathcal O})\to \mathcal O''$ are equivalent 
if there exists a commutative diagram
$$\begin{array}{ccc}
R(\rho_{\mathcal O})&\to& \mathcal O'\\
\downarrow&&\downarrow\\
\mathcal O''&\to &\mathcal O'''
\end{array}$$ such that
$\mathcal O'''$ is the integer ring of a finite extension $E'''$ of $E$ containing both the fraction fields 
$E'$ and $E''$ of $\mathcal O'$ and $\mathcal O''$ respectively. 
Applying the universal property of $R(\rho_{\mathcal O})$ to the tuples
$(\rho_{\mathcal O}, (I)_{s\in S})\mod \mathfrak m_{\mathcal O}^i$
for all $i$,  we get a unique local $\mathcal O$-algebra
homomorphism $$\varphi_0: R(\rho_{\mathcal O})\to \mathcal O$$ which
brings the the universal tuple $(\rho_{\mathrm{univ}}, (P_{\mathrm{univ},s})_{s\in S})$ to a tuple 
equivalent to $(\rho_{\mathcal O},(I)_{s\in S})$. Replacing $(\rho_{\mathrm{univ}}, (P_{\mathrm{univ},s})_{s\in S})$
by an equivalent tuple if necessary, we may assume $\varphi_0$ brings the
universal representation
$\rho_{\mathrm{univ}}:\pi_1(X-S,\bar\eta)\to
\mathrm{GL}(r,R(\rho_{\mathcal O}))$ to $\rho_{\mathcal O}$, and
brings $P_{\mathrm{univ},s}$ to $I$ for all $s\in S$. The homomorphism $\varphi_0$ defines a point $t_0$ in
 $\mathfrak F^{\mathrm{rig}}$. Let $t$ be a point in
$\mathfrak F^{\mathrm{rig}}$  corresponding to a local homomorphism 
$$\varphi_t:R(\rho_{\mathcal O})\to \mathcal O'$$ of $\mathcal O$-algebras. 
Let $(\rho_t, (P_{t,s})_{s\in S})$ be the tuple obtained by pushing forward 
the universal tuple $(\rho_{\mathrm{univ}},(P_{\mathrm{univ},s})_{s\in S})$
through the homomorphism $\varphi_t$.  Note that
$\rho_t:\pi_1(X-S,\bar\eta)\to \mathrm{GL}(\mathcal O'^r)$ is 
a representation, $P_{t,s}\in \mathrm{GL}(\mathcal O'^r)$, and
\begin{eqnarray*}
&&\rho_t \mod\mathfrak m_{\mathcal O'}=\rho_0,\quad P_{t,s}\mod\mathfrak m_{\mathcal O'}=I,\quad \mathrm{det}(\rho_t)=\lambda, \quad P_{t,s}^{-1}\rho_t|_{\mathrm{Gal}(\bar\eta_s/\eta_s)} P_{t,s}=  \rho_{\mathcal O}|_{\mathrm{Gal}(\bar\eta_s/\eta_s)},\\
&& \varphi_{t_0}=\varphi_0, \quad \rho_{t_0}=\rho_{\mathcal O},\quad P_{t_0,s}=I,
\end{eqnarray*} 
Now suppose $\rho_E$ is
rigid.  Then the equations 
$\mathrm{det}(\rho_t)=\lambda$ and $P_{t,s}^{-1}\rho_t|_{\mathrm{Gal}(\bar\eta_s/\eta_s)} P_{t,s}=  \rho_{\mathcal O}|_{\mathrm{Gal}(\bar\eta_s/\eta_s)}$
$(s\in S)$ imply that $\rho_t$ is isomorphic to $\rho^{(j)}$ as $\overline{\mathbb Q}_\ell$-representations for some $j\in J$, where 
$\rho^{(j)}:\pi_1(X,\bar\eta)\to\mathrm{GL}(\overline{\mathbb Q}_\ell^r)$ is the representation corresponding to the $\overline{\mathbb Q}_\ell$-sheaf
$\mathcal F^{(j)}$ in the definition of the rigidity for $\mathcal F_E$. 
So there exists $P\in\mathrm{GL}(\overline{\mathbb Q}_\ell^r)$ such that $P^{-1}\rho_tP=\rho^{(j)}$. The following lemma of Deligne
shows that for those $t$ sufficiently close to $t_0$, we can get $\rho^{(j)}=\rho_E$, and 
we can choose $P$ so that $P\in \mathrm{GL}(\mathcal O^r_{\overline{\mathbb Q}_\ell})$ and 
$P\equiv I \mod \mathfrak m_{\mathcal O_{\overline{\mathbb Q}_\ell}}$, where $\mathcal O_{\overline{\mathbb Q}_\ell}$ 
is the integer ring of $\overline{\mathbb Q}_\ell$.  

\begin{lemma}[P. Deligne] \label{conj} Notation as above. Suppose 
$\mathcal F_E$ is absolutely irreducible and rigid. Then there
exists an admissible neighborhood $V$ of the point $t_0$ in $\mathfrak F^{\mathrm{rig}}$ such
that for any $t\in V$, there exists  $P\in \mathrm{GL}(\mathcal O_{\overline {\mathbb
Q}_\ell}^r)$ such that $P\equiv I \mod \mathfrak m_{{\mathcal O}_{\overline {\mathbb Q}_\ell}}$ and 
$P^{-1}\rho_tP=\rho_{\mathcal O}$.
\end{lemma}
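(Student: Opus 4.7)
The plan is to prove the lemma in two stages: first, using rigidity together with the continuity of characters on the rigid analytic space, exhibit an admissible neighborhood $V$ of $t_0$ on which every $\rho_t$ is isomorphic (over $\overline{\mathbb Q}_\ell$) to $\rho_E$; second, for each such $t$, construct the desired integral conjugator $P$ by a Schur-type normalization.

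For the first stage, I would use the rigidity hypothesis to get a finite family of representations $\rho^{(j)}:\pi_1(X-S,\bar\eta)\to\mathrm{GL}_r(\overline{\mathbb Q}_\ell)$, $j\in J$, such that every lisse sheaf with the same determinant and local monodromy as $\mathcal F_E$ is isomorphic to some $\mathcal F^{(j)}$. After enlarging $E$ (hence also $\mathcal O$) we may assume each $\rho^{(j)}$ takes values in $\mathrm{GL}_r(E)$ and that $\rho^{(j_0)}=\rho_E$ for some index $j_0$. By absolute irreducibility of $\rho_E$, characters separate $\rho_E$ from the other $\rho^{(j)}$: for each $j\ne j_0$ I pick $g_j\in\pi_1(X-S,\bar\eta)$ with $\mathrm{tr}\,\rho_E(g_j)\ne\mathrm{tr}\,\rho^{(j)}(g_j)$. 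The element $\mathrm{tr}\,\rho_{\mathrm{univ}}(g_j)\in R(\rho_{\mathcal O})$, viewed through Berthelot's construction as an analytic function on $\mathfrak F^{\mathrm{rig}}$, has value $\mathrm{tr}\,\rho_t(g_j)$ at any point $t$; hence
\[
h_j(t)\;=\;\mathrm{tr}\,\rho_t(g_j)\;-\;\mathrm{tr}\,\rho^{(j)}(g_j)
\]
is a rigid analytic function on $\mathfrak F^{\mathrm{rig}}$ with $h_j(t_0)\ne 0$. Each $h_j$ is therefore nonvanishing on an admissible neighborhood of $t_0$, and the intersection $V$ of these finitely many neighborhoods has the property that $\rho_t\not\cong\rho^{(j)}$ for $t\in V$ and $j\ne j_0$. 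Combined with rigidity, this forces $\rho_t\cong\rho^{(j_0)}=\rho_E$, and hence $\rho_t\cong\rho_{\mathcal O}$ over $\overline{\mathbb Q}_\ell$, for every $t\in V$.

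For the second stage, fix $t\in V$ and choose any $P_0\in\mathrm{GL}_r(\overline{\mathbb Q}_\ell)$ with $P_0^{-1}\rho_t P_0=\rho_{\mathcal O}$; by absolute irreducibility of $\rho_E$, the set of such $P_0$ is a single $\overline{\mathbb Q}_\ell^\ast$-orbit. Rescaling $P_0$ by an appropriate power of a uniformizer of $\mathcal O_{\overline{\mathbb Q}_\ell}$, I may assume $P_0$ has entries in $\mathcal O_{\overline{\mathbb Q}_\ell}$ with at least one matrix entry a unit, so that the reduction $\bar P_0$ is a nonzero matrix over $\bar{\mathbb F}_\ell$. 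Reducing the identity $\rho_t P_0=P_0\rho_{\mathcal O}$ modulo $\mathfrak m$ gives $\rho_0\bar P_0=\bar P_0\rho_0$. Since the setup ensures $\rho_0$ has scalar endomorphism algebra (the hypothesis of Lemma \ref{fiber}(ii)), Schur's lemma forces $\bar P_0=cI$ for some $c\in\bar{\mathbb F}_\ell^\ast$. Lifting $c$ to a unit $\tilde c\in\mathcal O_{\overline{\mathbb Q}_\ell}^\ast$ and setting $P=\tilde c^{-1}P_0$, I obtain the required matrix: $P\in\mathrm{GL}_r(\mathcal O_{\overline{\mathbb Q}_\ell})$, $P\equiv I\pmod{\mathfrak m}$, and $P^{-1}\rho_t P=\rho_{\mathcal O}$.

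The main obstacle is the first stage. It requires knowing that the characters of the universal representation extend to honest analytic functions on the rigid space $\mathfrak F^{\mathrm{rig}}$ — which ultimately rests on the continuity of $\rho_{\mathrm{univ}}$ and on Berthelot's description of $\mathfrak F^{\mathrm{rig}}$ — and crucially on the finiteness of the family $\{\rho^{(j)}\}$ coming from rigidity, so that only finitely many conditions $h_j\ne 0$ need to be imposed simultaneously. The normalization in the second stage is essentially formal, but it does rely on $\rho_0$ having only scalar self-intertwiners; if $\rho_0$ were reducible, $\bar P_0$ could be a nonzero singular matrix with no way to rescale it into $\mathrm{GL}_r(\mathcal O_{\overline{\mathbb Q}_\ell})$.
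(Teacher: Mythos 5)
Your first stage (separating $\rho_E$ from the other $\rho^{(j)}$ by traces of the universal representation, and intersecting finitely many admissible nonvanishing loci) is a legitimate alternative route to the conclusion that $\rho_t\cong\rho_E$ over $\overline{\mathbb Q}_\ell$ for $t$ in some admissible $V$. The paper instead takes $V$ to be the polydisc $\{|t_i|\le|\pi^N|\}$, which forces the stronger congruence $\rho_t\equiv\rho_{\mathcal O}\bmod\pi^N$, and then rules out $j\ne j_0$ via a ``separation at level $\pi^N$'' statement (Lemma~\ref{big}(ii)); your trace argument achieves the same goal without that machinery.

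The genuine gap is in your second stage. You write that ``the setup ensures $\rho_0$ has scalar endomorphism algebra (the hypothesis of Lemma~\ref{fiber}(ii))'', but that hypothesis is \emph{not} in force in Lemma~\ref{conj}: the only irreducibility assumption is on $\mathcal F_E$, i.e.\ on $\rho_E$ in characteristic $0$. The residual representation $\rho_0=\rho_{\mathcal O}\bmod\mathfrak m$ can perfectly well be reducible even when $\rho_E$ is absolutely irreducible, and then $\mathrm{End}_{\pi_1}(\kappa^r)$ is larger than the scalars. In that case your normalized $\bar P_0$ is only a nonzero (possibly singular, non-scalar) intertwiner of $\rho_0$, and no rescaling by an element of $\overline{\mathbb Q}_\ell^\ast$ can make $P_0$ reduce to $I$. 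Since the set of all conjugators is a single $\overline{\mathbb Q}_\ell^\ast$-orbit, the reduction of $P_0$ modulo $\mathfrak m$ is determined up to a nonzero scalar of $\overline{\mathbb F}_\ell$, so if it is not scalar for one choice it is not scalar for any choice --- there is nothing to rescue. This is precisely the point where the paper needs Lemma~\ref{little}/\ref{big}: it shows by a compactness (inverse-limit) argument that for $N$ sufficiently large, \emph{every} endomorphism of $\mathcal F_{\mathcal O'}/\pi^N\mathcal F_{\mathcal O'}$ reduces to a scalar mod $\pi$, with $N$ depending only on $\mathcal F_{\mathcal O}$, and this is a genuinely stronger statement than Schur's lemma for $\rho_0$. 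That is also why the paper's $V$ must be a $\pi^N$-congruence neighborhood rather than your trace neighborhood: the congruence $\rho_t\equiv\rho_{\mathcal O}\bmod\pi^N$ is what turns the conjugator into an endomorphism at level $\pi^N$ to which Lemma~\ref{big}(i) can be applied. To repair your proof you would need to supply Lemma~\ref{little}/\ref{big} (or an equivalent uniform-$N$ statement) and shrink your $V$ further so that the $\pi^N$-congruence holds.
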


We will give Deligne's proof of this lemma in \S 3. 

\medskip
Let $G:\mathcal C_{\mathcal O}\to(\mathrm{Sets})$ be the functor defined by
$$G(A)=\{(P_s)_{s\in S}| P_s \in
\mathrm{Aut}_{\mathrm{Gal}(\bar\eta_s/\eta_s)}(A^r),\; P_s\equiv
I\mod \mathfrak m_A\}/\sim,$$ where $A^r$ is provided with the
$\mathrm{Gal}(\bar\eta_s/\eta_s)$-action via $\rho_{\mathcal O}$,
and two tuples $(P_s^{(i)})_{s\in S}$ $(i=1,2)$ are equivalent if
there exists an invertible scalar $(r\times r)$-matrix $P= uI$ for
some unit $u$ in $A$ such that $P_s^{(1)}=P^{-1}P_s^{(2)}$ for all
$s\in S$. Using Schlessinger's criteria, one can verify that the functor
$G$ is pro-representable.  
Let $R(G)$ be the universal deformation ring for $G$. 
The rigid analytic space $\mathfrak G^{\mathrm{rig}}$ 
associated to $R(G)$ is a group object, and its points can be identified with the set 
$$\{(P_s)_{s\in S}| P_s \in
\mathrm{Aut}_{\mathrm{Gal}(\bar\eta_s/\eta_s)}({\mathcal O}_{\overline {\mathbb Q}_\ell}^r),\; P_s\equiv
I\mod \mathfrak m_{{\mathcal O}_{\overline {\mathbb Q}_\ell}}\}/\sim,$$ 
where ${\mathcal O}^r_{\overline{\mathbb Q}_\ell}$ is provided with the  
$\mathrm{Gal}(\bar\eta_s/\eta_s)$-action via $\rho_{\mathcal O}$, and 
two tuples $(P_s^{(i)})_{s\in S}$ $(i=1,2)$ are equivalent if there exists a unit $u$ in ${\mathcal O}_{\overline {\mathbb Q}_\ell}$
such that $P_s^{(1)}=u^{-1}P_s^{(2)}$ for all
$s\in S$.
Note that 
$$\mathrm{dim}\,\mathfrak G^{\mathrm{rig}}=\sum_{s\in S}\mathrm{dim}\,
\mathrm{End}_{\mathrm{Gal}(\bar\eta_s/\eta_s)}(E^r)-1=\sum_{s\in S}\mathrm{dim}\,
H^0(\eta_s, \mathcal End(\mathcal F_E|_{\eta_s}))-1,$$ where $E^r$ is provided with the
$\mathrm{Gal}(\bar\eta_s/\eta_s)$-action via $\rho_E$.

\medskip
Let $F^\lambda:\mathcal C_{\mathcal O}\to
(\mathrm{Sets})$ be the functor introduced before for the data
$(\rho_{\mathcal O},(I)_{s\in S})$. Note that for any tuple $(P_s)_{s\in S}$ defining an element in $G(A)$, 
$(\rho_{\mathcal O}, (P_s)_{s\in S})$ is a tuple defining an element in $F^\lambda(A)$. If two tuples
 $(P^{(i)}_s)_{s\in S}$ $(i=1,2)$ in $G(A)$ are equivalent, then the two tuples 
 $(\rho_{\mathcal O}, (P^{(i)}_s)_{s\in S})$ in $F^\lambda(A)$ are equivalent. 
 So we have a morphism of functors
$G\to F^\lambda$ defined by
$$G(A)\to F^\lambda(A),\quad (P_s)_{s\in S}\mapsto (\rho_{\mathcal O}, (P_s)_{s\in S}).$$ 

\begin{lemma}\label{surjection} Let $\mathfrak F^{\mathrm{rig}}$ (resp. $\mathfrak G^{\mathrm{rig}}$) be
the rigid analytic space associated to the universal deformation ring $R(\rho_{\mathcal O})$ (resp. $R(G)$) 
for the functor $F^\lambda$ (resp. $G$), 
and let  $f:\mathfrak G^{\mathrm{rig}}\to
\mathfrak F^{\mathrm{rig}}$ be the morphism on rigid analytic spaces induced by the
morphism of functors $G\to F^\lambda$. Suppose that $\mathcal F_E$ is absolutely 
irreducible and rigid. Let $V$ be the admissible neighborhood of $t_0$
in Lemma \ref{conj}.
Then $f:f^{-1}(V)\to V$ is surjective in the sense that every 
point in $V$ is the image of a point in 
$\mathfrak G^{\mathrm{rig}}$.
\end{lemma}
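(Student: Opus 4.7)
The plan is to lift each point $t \in V$ to a point $t' \in \mathfrak G^{\mathrm{rig}}$ by exploiting the conjugation matrix supplied by Lemma \ref{conj}. A point $t \in V$ corresponds to a local $\mathcal O$-algebra homomorphism $\varphi_t: R(\rho_{\mathcal O}) \to \mathcal O'$ for some integer ring $\mathcal O'$ of a finite extension of $E$, together with the induced tuple $(\rho_t, (P_{t,s})_{s \in S})$. Lemma \ref{conj} furnishes $P \in \mathrm{GL}(\mathcal O_{\overline{\mathbb Q}_\ell}^r)$ with $P \equiv I \mod \mathfrak m_{\mathcal O_{\overline{\mathbb Q}_\ell}}$ and $P^{-1}\rho_t P = \rho_{\mathcal O}$. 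Since $P$ has only finitely many entries, all of them lie in some finite extension $E''/E$; enlarging $E'$ to a compositum $E''' \supset E', E''$, I will take all the data to live over a single integer ring $\mathcal O''' := \mathcal O_{E'''}$, using the admissible equivalence of points described in the paragraph preceding the statement.

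Next, I would set $Q_s := P^{-1} P_{t,s}$ for each $s \in S$. Two properties will be immediate: first, $Q_s \equiv I \mod \mathfrak m_{\mathcal O'''}$ since both factors reduce to $I$; second, substituting $\rho_t(g) = P \rho_{\mathcal O}(g) P^{-1}$ into the identity $P_{t,s}^{-1}\rho_t(g) P_{t,s} = \rho_{\mathcal O}(g)$ for $g \in \mathrm{Gal}(\bar\eta_s/\eta_s)$ shows that $Q_s$ commutes with the image of $\rho_{\mathcal O}|_{\mathrm{Gal}(\bar\eta_s/\eta_s)}$, hence $Q_s \in \mathrm{Aut}_{\mathrm{Gal}(\bar\eta_s/\eta_s)}(\mathcal O'''^r)$. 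Therefore the tuple $(Q_s)_{s \in S}$ defines an element of $G(\mathcal O'''/\mathfrak m^n)$ for every $n$, and the universal property of $R(G)$ (the evident analogue of Proposition \ref{baseextension} for the functor $G$) yields a local homomorphism $R(G) \to \mathcal O'''$, i.e., a point $t' \in \mathfrak G^{\mathrm{rig}}$.

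Finally, I would verify that $f(t') = t$. By construction, $f(t')$ corresponds to the tuple $(\rho_{\mathcal O}, (Q_s)_{s \in S}) = (P^{-1}\rho_t P, (P^{-1}P_{t,s})_{s \in S})$, which is precisely the image of $(\rho_t, (P_{t,s})_{s \in S})$ under conjugation by $P$. Since $P \equiv I$, this $P$ is admissible as the conjugator in the equivalence relation defining $F^\lambda$, so the two tuples represent the same class; hence $f(t') = t$ as points of $\mathfrak F^{\mathrm{rig}}$. The essential input of the entire argument is Lemma \ref{conj} — that is where rigidity of $\mathcal F_E$ is used. Once that conjugation matrix is in hand, the remainder is a short bookkeeping exercise, the only mild technicality being the need to enlarge the coefficient ring so that $P$ and $\varphi_t$ become defined over the same integer ring, which the rigid-analytic formalism absorbs harmlessly.
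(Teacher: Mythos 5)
Your proof is correct and takes essentially the same approach as the paper: invoke Lemma \ref{conj} to obtain the conjugating matrix $P$, enlarge the coefficient ring so that $P$ lives over the same integer ring as $\varphi_t$, form $Q_s = P^{-1}P_{t,s}$, verify that $(Q_s)_{s\in S}$ defines a point of $\mathfrak G^{\mathrm{rig}}$ via the universal property of $R(G)$, and check that this point maps to $t$. The only cosmetic difference is that the paper phrases the construction of the homomorphism $R(G)\to\mathcal O'$ explicitly through the compatible family of reductions modulo $\mathfrak m^i_{\mathcal O'}$, while you appeal directly to the evident analogue of Proposition \ref{baseextension} for $G$; both amount to the same thing.
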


\begin{proof} Let $t$ be a point in $V$, and let
$\varphi_t:R(\rho_{\mathcal O})\to \mathcal O'$ be the corresponding local homomorphism of $\mathcal O$-algebras
(\cite[7.1.10]{dJ}),
where $\mathcal O'$ is the integer ring of a finite extension $E'$ of $E$. Let  $(\rho_t, (P_{t,s})_{s\in S})$ be the tuple 
obtained by pushing forward 
the universal tuple $(\rho_{\mathrm{univ}},(P_{\mathrm{univ},s})_{s\in S})$
through the homomorphism $\varphi_t$. By Lemma \ref{conj}, there exists $P\in\mathrm{GL} (\mathcal O^r_{\overline {\mathbb
Q}_\ell})$ such that $P\equiv I\mod \mathfrak m_{\mathcal O_{\overline {\mathbb
Q}_\ell}}$ and $P^{-1}\rho_t P=\rho_{\mathcal O}$. By enlarging $E'$, we may assume $P\in \mathrm{GL} (\mathcal O'^r)$. 
Then for each $i$, the tuple $(\rho_t,(P_{t,s})_{s\in S})\mod {\mathfrak
m}^i_{\mathcal O'}$ is equivalent to the tuple $(\rho_{\mathcal
O},(P^{-1}P_{t,s})_{s\in S})\mod {\mathfrak m}^i_{\mathcal O'}$. The family of
tuples $(P^{-1}P_{t,s})_{s\in S}\mod {\mathfrak m}^i_{\mathcal O'}$ defines a family of local $\mathcal O$-algebra homomorphisms
$R(G)\to \mathcal O'/{\mathfrak m}^i_{\mathcal O'}$. This family is compatible
and defines a local homomorphism $R(G )\to
\mathcal O'$ of $\mathcal O$-algebras. It corresponds to a point in $\mathfrak
G^{\mathrm{rig}}$ that is mapped by $f$ to the point $t$ of $\mathfrak
F^{\mathrm{rig}}$.
\end{proof}

The following result is due to Junyi Xie, whose proof is given in \S 4.

\begin{lemma}[J. Xie] \label{xie} Let $f:X\to Y$ be a morphism of rigid analytic spaces over 
a nonarchimedean field $K$ with a non-trivial valuation.
Suppose that $Y$  is separated and that 
$X$ can be covered by countably many  $K$-affinoid subdomains $X_n$ $(n=1,2,...)$. 
If $f$ is surjective on the underlying sets of points, then $\mathrm{dim}\,Y\leq 
\mathrm{dim}\,X$. 
\end{lemma}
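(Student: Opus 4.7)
The plan is to reduce to the case where $Y$ is $K$-affinoid and $K$ is algebraically closed, replace the (generally non-closed) images $f(X_n)$ by their Zariski closures in $Y$, and then invoke a Baire category argument on the set of classical points to conclude.

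Since $\dim Y = \sup_U \dim U$ over affinoid subdomains $U \subset Y$, and since the separatedness of $Y$ ensures that $(f|_{X_n})^{-1}(U)$ is an affinoid subdomain of $X_n$ for each affinoid subdomain $U \subset Y$, one can replace $X$ by $f^{-1}(U)$ and $Y$ by $U$; thus assume $Y = \mathrm{Sp}(B)$ is $K$-affinoid. Base-changing from $K$ to the completion $\widehat{\overline K}$ of its algebraic closure preserves dimensions, surjectivity on rigid points, and the countable affinoid cover of $X$, so further assume $K$ is algebraically closed. For each $n$, put $I_n = \ker\bigl(B \to \mathcal O(X_n)\bigr)$ and $Z_n = \mathrm{Sp}(B/I_n) \subset Y$; then $f|_{X_n}$ factors through $Z_n$ and the resulting morphism $X_n \to Z_n$ is dominant, so by the standard dimension theory for dominant morphisms of $K$-affinoid spaces, $\dim Z_n \le \dim X_n \le \dim X$. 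Surjectivity of $f$ on classical points yields $Y(K) = \bigcup_n Z_n(K)$, hence $Y = \bigcup_n Z_n$ set-theoretically.

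Now assume for contradiction that $\dim Y > \dim X$. Decomposing $Y$ into its finitely many irreducible components and passing to one of dimension $>\dim X$, it suffices to prove the following claim: \emph{an irreducible $K$-affinoid space $W$ of positive dimension cannot be written as a countable union of strict Zariski closed analytic subvarieties.} Fix a closed immersion $W \hookrightarrow B^N_K = \mathrm{Sp}\bigl(K\langle T_1,\dots,T_N\rangle\bigr)$; then $W(K)$ is a closed subset of the compact metric space $(\mathfrak o_K)^N$ with the nonarchimedean maximum norm, and is itself a complete metric space. For any strict Zariski closed $V \subsetneq W$, cut out by $g_1,\dots,g_k \in \mathcal O(W)$ not all zero, $V(K)$ is nowhere dense in $W(K)$: if $V(K)$ contained an open metric ball $B \cap W(K)$ for some open polydisc $B$ in $(\mathfrak o_K)^N$, then each $g_i$ would vanish on all classical points of the nonempty admissible open subdomain $B \cap W \subset W$; since $K$ is algebraically closed and complete, classical points separate analytic functions on reduced affinoid subdomains, so $g_i|_{B \cap W} = 0$, and the nonarchimedean identity theorem on the irreducible $W$ then forces $g_i \equiv 0$, contradicting $V \subsetneq W$. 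Baire's theorem applied to the complete metric space $W(K)$ then gives $\bigcup_n (Z_n\cap W)(K) \ne W(K)$, contradicting the set-theoretic identity $W(K) = \bigcup_n (Z_n\cap W)(K)$ obtained above.

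The main obstacle is the nowhere-dense step of the Baire argument: making rigorous the passage from ``each $g_i$ vanishes on all $K$-points in an open metric ball of $W(K)$'' to ``each $g_i$ vanishes as an analytic function on a nonempty admissible open subdomain of $W$,'' and then cleanly invoking the identity theorem on the irreducible affinoid $W$. A secondary technical point is to verify that base change to $\widehat{\overline K}$ preserves the hypothesis of surjectivity on the underlying sets of rigid points, and that intersecting the affinoid cover of $X$ with the preimage of an affinoid subdomain of $Y$ (this uses the separatedness of $Y$) again produces a cover by countably many affinoid subdomains.
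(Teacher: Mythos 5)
Your proof is correct, but it takes a genuinely different route from the one in the paper, so a comparison is in order. The paper first reduces, via Noether normalization, to the case $Y=E(0,1)^d$, then passes entirely to Berkovich spaces: it shows by a Baire argument (using compactness of $\mathcal X_n$ in the Berkovich topology and density of rigid points) that $f(\mathcal X)$ must contain a Gauss point $\xi$ of some sub-polydisc, and then concludes from the chain $\dim X\ge d(\mathcal H(x)/K)\ge s(\mathcal H(x)/K)\ge s(\mathcal H(\xi)/K)=d=\dim Y$ using Berkovich's dimension invariants (\cite[9.1]{B2}). You instead stay entirely within the rigid-analytic category: you replace each $f(X_n)$ by its scheme-theoretic image $Z_n\subset Y$, invoke the fact that a dominant morphism of affinoids cannot raise dimension to get $\dim Z_n\le\dim X$, and then apply Baire to the $K$-points of an irreducible component $W$ of $Y$ of maximal dimension to show that $W(K)$ cannot be covered by the countably many nowhere-dense sets $(Z_n\cap W)(K)$; the nowhere-dense step you correctly reduce to reducedness of affinoid subdomains plus the identity theorem (flatness of affinoid subdomains over a domain forces injectivity of restriction). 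Both arguments hinge on a Baire category step, but deployed at different points and for different purposes. What each buys: your version avoids Berkovich spaces and the $d(\cdot), s(\cdot)$ invariants entirely and is formally shorter, but it concentrates the genuine geometric content in the single cited input ``$\dim Z_n\le\dim X_n$ for the dominant affinoid morphism $X_n\to Z_n$''; this is a correct and standard fact (it is in the literature, e.g.\ as part of Ducros's dimension theory for analytic spaces), but it is not elementary commutative algebra (injective maps of Noetherian rings can raise dimension), and a direct proof of it uses exactly the sort of Gauss-point/reduction-ring analysis that the paper's argument performs explicitly. So you should be sure to attach a precise reference for that step; with that done, your proof is a valid and somewhat more streamlined alternative.
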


The next result shows that the universal deformation ring $R(\rho_{\mathcal O})$ can be
used to recover the universal deformation rings for some $E'$-representations
of $\pi_1(X-S,\bar\eta)$. 

\begin{lemma} \label{genericfiber}  Let $E'$ be a finite extension of $E$, let $\mathcal O'$ be the 
integer ring of $E'$, let $\varphi_t: 
R(\rho_{\mathcal O})\to \mathcal O'$ be a local $\mathcal O$-algebra homomorphism,
let $(\rho_t, (P_{t,s})_{s\in S})$ be the tuple obtained by pushing forward 
the universal tuple $(\rho_{\mathrm{univ}},(P_{\mathrm{univ},s})_{s\in S})$
through the homomorphism $\varphi_t$, and let ${\mathfrak m}'_t$ (resp. ${\mathfrak m}_t$) be the kernel of the $E'$-algebra
(resp. $E$-algebra) homomorphism 
$$R(\rho_{\mathcal O})\otimes_{\mathcal O} E'\to E' \quad 
(\hbox{resp. }R(\rho_{\mathcal O})\otimes_{\mathcal O}E\to E')$$ induced by $\varphi_t$. 

(i) We have a canonical isomorphism 
$$(R(\rho_{\mathcal O})\otimes_{\mathcal O} E')_{{\mathfrak
m}'_t}^{\wedge}\cong R(\rho_t\otimes _{\mathcal O'}E'),$$ where $(R(\rho_{\mathcal O})\otimes_{\mathcal O}E')_{{\mathfrak
m}'_t}^{\wedge}$ is the completion of the local ring $(R(\rho_{\mathcal O})\otimes_{\mathcal O}E')_{{\mathfrak m}'_t}$,
and $R(\rho_t\otimes _{\mathcal O'}E')$ is the universal deformation ring of the functor
$F^\lambda:\mathcal C_{E'}\to (\mathrm{Sets})$ defined by 
\begin{eqnarray*}
F^\lambda(A)=\{(\rho,(P_s)_{s\in S})&|&\rho:\pi_1(X-S,
\bar\eta)\to\mathrm{GL}(A^r) \hbox{ is a representation},\; P_s\in \mathrm{GL}(A^r), \\
&&\rho\mod\mathfrak m_A=\rho_t, \quad P_s \mod \mathfrak m_A=P_{t,s},\\
&&\mathrm{det}(\rho)=\lambda,\quad P_s^{-1}\rho|_{\mathrm{Gal}(\bar\eta_s/\eta_s)} P_s=\rho_t|_{\mathrm{Gal}
(\bar\eta_s/\eta_s)}\hbox{ for all } s\in S\}/ \sim,
\end{eqnarray*}
for any local Artinian $E'$-algebra $A\in\mathrm{ob}\, \mathcal C_{E'}$ with residue field $E'$. 

(ii) Let $t$ be the point in $\mathfrak F
^{\mathrm{rig}}$ corresponding to $\varphi_t$. We have
$\widehat{\mathcal O}_{{\mathfrak F}^{\mathrm{rig}},t}\cong  (R(\rho_{\mathcal O})\otimes_{\mathcal O}E)_{\mathfrak
m_t}^{\wedge}.$
\end{lemma}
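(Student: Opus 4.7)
The plan is to prove (i) by matching universal properties on $\mathcal C_{E'}$, and to deduce (ii) from the standard description of completed local rings on Berthelot's rigid generic fiber due to de Jong.

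For part (i), $R(\rho_t\otimes_{\mathcal O'}E')$ pro-represents $F^\lambda$ on $\mathcal C_{E'}$ by construction. For $A\in\mathcal C_{E'}$, local $E'$-algebra homomorphisms $(R(\rho_{\mathcal O})\otimes_{\mathcal O}E')^{\wedge}_{\mathfrak m'_t}\to A$ correspond bijectively to local $\mathcal O$-algebra homomorphisms $R(\rho_{\mathcal O})\to A$ whose induced residue-field map is $\varphi_t$ followed by $\mathcal O'\hookrightarrow E'$. It therefore suffices to exhibit a natural bijection between such $\mathcal O$-algebra maps and elements of $F^\lambda(A)$. The forward direction is simply the pushforward of the universal tuple $(\rho_{\mathrm{univ}},(P_{\mathrm{univ},s}))$; the real work is the reverse direction, in which the universal property of $R(\rho_{\mathcal O})$ (extended by Proposition \ref{baseextension}) must be applied to a tuple living over an $E'$-algebra, not over an $\mathcal O$-algebra with residue field a finite extension of $\kappa$.

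Given $(\rho,(P_s))\in F^\lambda(A)$, I would descend to an $\mathcal O$-order inside $A$ as follows. Let $\tilde A\subset A$ be the $\mathcal O'$-subalgebra generated by the matrix entries of $\rho(g)-\rho_t(g)$ for $g\in\pi_1(X-S,\bar\eta)$ and of $P_s-P_{t,s}$ for $s\in S$; all these generators lie in the nilpotent ideal $\mathfrak m_A$. Continuity of $\rho$ together with compactness of the profinite group $\pi_1$ forces these generators to form a bounded subset of $A$, so the $\mathcal O'$-module $M\subset\mathfrak m_A$ they span is finitely generated. Since $\mathfrak m_A^N=0$ for some $N$, we have $\tilde A=\mathcal O'+M+M^2+\cdots+M^{N-1}$, a finite $\mathcal O'$-algebra. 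Consequently $\tilde A$ is a local Noetherian $\mathcal O'$-algebra with residue field $\kappa'$, and by Krull's theorem it equals its own $\mathfrak m_{\tilde A}$-adic completion.

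The tuple $(\rho,(P_s))$ has values in $\mathrm{GL}(r,\tilde A)$, and after the natural substitution $P_s\mapsto P_s P_{t,s}^{-1}$ (or its inverse) to convert between the reference representations $\rho_t$ and $\rho_{\mathcal O}$, each reduction modulo $\mathfrak m_{\tilde A}^n$ defines an element of the $\mathcal C_{\mathcal O}$-functor $F^\lambda(\tilde A/\mathfrak m_{\tilde A}^n)$ with enlarged residue field $\kappa'$. Proposition \ref{baseextension} then supplies a unique local $\mathcal O$-algebra homomorphism $f_n:R(\rho_{\mathcal O})\to\tilde A/\mathfrak m_{\tilde A}^n$; uniqueness makes these compatible, so they patch to $f:R(\rho_{\mathcal O})\to\tilde A$ in the limit. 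Composing with $\tilde A\hookrightarrow A$ produces the required $g:R(\rho_{\mathcal O})\to A$. The residue of $g$ factors as $R(\rho_{\mathcal O})\to\tilde A/\mathrm{nil}(\tilde A)=\mathcal O'\hookrightarrow E'$ and, by the uniqueness statement at the $\mathcal O'$-level, coincides with $\varphi_t$ composed with $\mathcal O'\hookrightarrow E'$. Tensoring with $E'$ over $\mathcal O$ and invoking the universal property of the $\mathfrak m'_t$-adic completion yields the desired map $(R(\rho_{\mathcal O})\otimes_{\mathcal O}E')^{\wedge}_{\mathfrak m'_t}\to A$. Uniqueness on both sides is inherited from that in Proposition \ref{baseextension}. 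Part (ii) is a standard property of Berthelot's construction of rigid generic fibers of formal schemes, as developed in \cite[\S 7]{dJ}: the completed local ring at a classical point $t$ of $\mathfrak F^{\mathrm{rig}}$ corresponding to $\varphi_t$ is precisely the completion of the localization of $R(\rho_{\mathcal O})\otimes_{\mathcal O}E$ at $\mathfrak m_t$. The main obstacle in the whole proof is the descent step in (i): constructing the finite $\mathcal O'$-order $\tilde A$ with the boundedness and completeness required to transfer the universal property across the residue-field change from $E'$ to $\kappa'$, and tracking the change of reference representation $\rho_t$ versus $\rho_{\mathcal O}$ carefully enough to invoke Proposition \ref{baseextension}.
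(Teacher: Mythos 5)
Your approach is essentially the same as the paper's, which simply reduces to the case $E'=E$ via Proposition \ref{baseextension} and then refers to Conrad's argument for the relationship between the completed localization of $R\otimes_{\mathcal O}E$ at the classical point and the equal-characteristic deformation ring; what you have written is precisely that Conrad-style descent (bounded image of a compact group $\Rightarrow$ a finite $\mathcal O'$-order $\tilde A$, then apply Proposition \ref{baseextension} level-by-level on the Artinian quotients $\tilde A/\mathfrak m_{\tilde A}^n$ and pass to the limit), and part (ii) is cited from de Jong exactly as the paper does.

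One detail deserves a warning, because you flagged it yourself with ``(or its inverse).'' The pushforward of the universal tuple through any $\mathcal O$-algebra map satisfies $P_s^{-1}\rho|_{\mathrm{Gal}(\bar\eta_s/\eta_s)}P_s=\rho_{\mathcal O}|_{\mathrm{Gal}(\bar\eta_s/\eta_s)}$, whereas the $\mathcal C_{E'}$-functor in the statement uses $\rho_t|_{\mathrm{Gal}(\bar\eta_s/\eta_s)}$ on the right. The substitution $P_s\mapsto P_sP_{t,s}^{-1}$ does convert the conjugation target from $\rho_{\mathcal O}$ to $\rho_t$, but it simultaneously changes the residual value of $P_s$ from $P_{t,s}$ to $I$. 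Conversely the substitution $P_s\mapsto P_sP_{t,s}$, which you'd use to go the other way, changes the residual value to $P_{t,s}^2$ and hence would not reduce to the basepoint $\varphi_t$ modulo the nilradical. So the two conditions ``$P_s\bmod\mathfrak m_A=P_{t,s}$'' and ``$P_s^{-1}\rho P_s=\rho_t$ on $\mathrm{Gal}(\bar\eta_s/\eta_s)$'' in the displayed definition of $F^\lambda:\mathcal C_{E'}\to(\mathrm{Sets})$ are not both compatible with the pushforward normalization: one of them should be normalized by $P_{t,s}$ (e.g.\ replace $P_{t,s}$ by $I$ in the residual condition, keeping $\rho_t$ on the right, or keep $P_{t,s}$ in the residual condition and use $\rho_{\mathcal O}$ on the right). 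Once the convention is fixed consistently, your descent argument goes through, and the residue of the resulting map $g$ does agree with $\varphi_t$ as you need. Be precise about this when you actually carry out the verification, since an unlucky choice of substitution will silently produce a map landing at the wrong classical point.
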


\begin{proof} Using Lemma \ref{baseextension}, we can reduce the proof of (i) to the case where $E'=E$. This 
case can be proved by modifying the argument in B.
Conrad's lecture note \cite[\S 7]{C}. 
(ii) follows from
\cite[Lemma 7.1.9]{dJ}. \end{proof}

We are now ready to prove Theorem \ref{maintheorem}.

\begin{proof}[Proof of Theorem \ref{maintheorem}] Suppose 
$\mathcal F_E$ is absolutely irreducible and rigid. Let's prove 
$\mathrm{dim}\, H^1(X,j_\ast \mathcal End(\mathcal F_E))= 2g$ and 
$H^1(X,j_\ast \mathcal End^{(0)}(\mathcal F_E))=0$.

Let $\varphi_0:R(\rho_{\mathcal O})\to \mathcal O$ 
be the local homomorphism of $\mathcal O$-algebras corresponding to 
the point $t_0$ in ${\mathfrak F}^{\mathrm{rig}}$, and let ${\mathfrak m}_0=\mathrm{ker}(\varphi_0\otimes{\mathrm{id}}_E)$. 
By Lemma \ref{genericfiber}, we have 
$$R(\rho_E)\cong (R(\rho_{\mathcal O})\otimes_{\mathcal O}E)_{\mathfrak m_0}^\wedge\cong
\widehat {\mathcal O}_{\mathfrak F^{\mathrm{rig}},t_0}.$$ We apply 
Lemma \ref{fiber} to the case $\Lambda=E$. Note that $\kappa_\Lambda=E$ has characteristic $0$ and 
$r$ is always invertible in $\kappa_\Lambda$. Then $R(\rho_E)$ is a formally smooth $E$-algebra,
and hence its dimension coincides with the dimension of its Zariski tangent space which is  
$\mathrm{dim}\, H^1_c(X-S, \mathcal End(\mathcal
F_E))-2g.$ So we
have $$\mathrm{dim}\, \widehat {\mathcal O}_{\mathfrak
F^{\mathrm{rig}},t_0}=\mathrm{dim}\, H^1_c(X-S,\mathcal End(\mathcal
F_E))-2g.$$ Let $V$ be the
admissible neighborhood of $t_0$ in Lemmas
\ref{conj}. By Lemmas \ref{surjection} and \ref{xie}, we have $\mathrm{dim}\, V\leq \mathrm{dim}\, f^{-1}(V)$. So
we have 
$$
\mathrm{dim}\, \widehat {\mathcal O}_{\mathfrak
F^{\mathrm{rig}},t_0}\leq \mathrm{dim}\, V\leq \mathrm{dim}\, f^{-1}(V)\leq \mathrm{dim}\,
{\mathfrak G}^{\mathrm{rig}}=
\sum_{s\in S}\mathrm{dim}\,
H^0(\eta_s, \mathcal End(\mathcal F_E|_{\eta_s}))-1.
$$ Comparing with the above expression for $\mathrm{dim}\, \widehat {\mathcal O}_{\mathfrak
F^{\mathrm{rig}},t_0}$, we get
$$\mathrm{dim}\, H^1_c(X-S, \mathcal End(\mathcal
F_E))-2g\leq 
\sum_{s\in S}\mathrm{dim}\,
H^0(\eta_s, \mathcal End(\mathcal F_E|_{\eta_s}))-1.\eqno(1.1)$$
Note that $j_!\mathcal End(\mathcal F_E)$ is a subsheaf of $j_*\mathcal End(\mathcal F_E)$,
the quotient sheaf is a sky-scrapper sheaf supported on $S$, and 
$$\Big(j_*\mathcal End(\mathcal F_E)/j_!\mathcal End(\mathcal F_E)\Big)_{\bar s}\cong 
H^0(\eta_s, \mathcal End(\mathcal F_E|_{\eta_s}))$$ for any $s\in S$.
We have
$H_c^0(X-S, \mathcal End(\mathcal F_E))=0$
since $X-S$ is an affine curve. Moreover, we have $H^0(X,j_* \mathcal End(\mathcal F_E))\cong
\mathrm{End}(\mathcal F_E)$. 
So we have a long exact sequence 
$$0\to \mathrm{End}(\mathcal F_E)\to\bigoplus_{s\in S} H^0(\eta_s, \mathcal End(\mathcal F_E|_{\eta_s}))
\to H^1_c(X-S, \mathcal End(\mathcal F_E))\to H^1(X,j_\ast\mathcal End(\mathcal F_E))\to 0.$$
It follows that 
$$\mathrm{dim}\,  H^1(X,j_\ast\mathcal End(\mathcal F_E))=\mathrm{dim}\,  H^1_c(X-S,
\mathcal End(\mathcal F_E))-\sum_{s\in S}\mathrm{dim}\,
H^0(\eta_s, \mathcal End(\mathcal F_E|_{\eta_s}))+\mathrm{dim}\,\mathrm{End}(\mathcal F_E).$$
So by the inequality (1.1), we have 
$$\mathrm{dim}\,  H^1(X,j_\ast\mathcal End(\mathcal F_E))\leq 2g+ \mathrm{dim}\,\mathrm{End}(\mathcal F_E)-1= 2g.$$
Here we use the fact that $\mathcal F_E$ is absolutely irreducible and hence
$\mathrm{dim}\,\mathrm {End}(\mathcal F_E)=1$ by Schur's lemma.  
We have
\begin{eqnarray*}
j_*\mathcal End(\mathcal F_E)\cong j_*\mathcal End^{(0)}(\mathcal F_E)\oplus E,\quad \mathrm{dim}\, H^1(X,E)=2g.
\end{eqnarray*}
So we have  $$\mathrm{dim}\, H^1(X,j_\ast \mathcal End^{(0)}(\mathcal F_E))\leq 0.$$
Hence  $H^1(X,j_\ast \mathcal End^{(0)}(\mathcal F_E))=0$ and $\mathrm{dim}\, H^1(X,j_\ast \mathcal End(\mathcal F_E))= 2g$.
\end{proof}

\begin{remark} One might hope that the naive functor $\mathcal C_{\mathcal O}\to (\mathrm{Sets})$ of deformations of $\mathcal F_{\mathcal O}$ with 
undeformed local monodromy and fixed determinant is pro-representable and the tangent space at a point in the rigid 
analytic space associated to the universal deformation ring of this functor can be identified with $H^1(X, j_*\mathcal End^{(0)}(\mathcal F_E))$.
This would greatly simplify the proof of the main theorem. Unfortunately the
Schlessinger's criteria do not hold for this naive functor. This forces us to work with the framed deformation. 
\end{remark}

\section{Proof of Lemma \ref{fiber}}

The content of this section is standard in the deformation theory of Galois representations. 
We include it for lack of reference. The following proposition proves the first part of Lemma \ref{fiber}.

\begin{proposition} \label{tangentspace}

$F^\lambda(\kappa_\Lambda[\epsilon])$ is finite dimensional.
If $r$ invertible in ${\kappa_\Lambda}$, then
\begin{eqnarray*}
\mathrm{dim}\, F^\lambda({\kappa_\Lambda}[\epsilon])&=& \mathrm{dim}\, H^1_c(X-S, \mathcal End^{(0)}(\mathcal F_0))+|S|-1\\
&=& \mathrm{dim}\, H^1_c(X-S,\mathcal End(\mathcal F_0))-2g.
\end{eqnarray*}
\end{proposition}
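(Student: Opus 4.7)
The plan is to give an explicit cocycle description of $F^\lambda(\kappa_\Lambda[\epsilon])$ and reduce its dimension to quantities visible in a localization long exact sequence on $X-S$. Any deformation of $\rho_0$ to $\kappa_\Lambda[\epsilon]$ can be written as $\rho(g) = (I + \epsilon c(g))\rho_0(g)$; multiplicativity of $\rho$ translates to $c \in Z^1(\pi_1(X-S,\bar\eta), \mathcal End(\mathcal F_0))$ with $\pi_1$ acting by conjugation, while $\det\rho = \lambda$ forces $\mathrm{tr}\,c \equiv 0$, so in fact $c \in Z^1(\pi_1, \mathcal End^{(0)}(\mathcal F_0))$. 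Writing $P_s = I + \epsilon Q_s$ with $Q_s \in M_r(\kappa_\Lambda)$, the framing condition $P_s^{-1}\rho|_{\mathrm{Gal}(\bar\eta_s/\eta_s)}P_s = \rho_\Lambda|_{\mathrm{Gal}(\bar\eta_s/\eta_s)}$ becomes the identity $c|_{\mathrm{Gal}(\bar\eta_s/\eta_s)} = -\partial Q_s$ of local $1$-cocycles, and the equivalence via $P = I + \epsilon R$ corresponds to $(c, (Q_s)) \mapsto (c + \partial R, (Q_s - R))$. Finite-dimensionality of $F^\lambda(\kappa_\Lambda[\epsilon])$ is then immediate, as it sits between a finite-dimensional étale cohomology group on $X-S$ and a finite sum of local endomorphism spaces.

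Next, assuming $r \in \kappa_\Lambda^\times$, I would introduce the forgetful map
$$\Phi: F^\lambda(\kappa_\Lambda[\epsilon]) \to H^1(X-S, \mathcal End^{(0)}(\mathcal F_0)), \quad [(c, (Q_s))] \mapsto [c].$$
The invertibility of $r$ gives a splitting $\mathcal End(\mathcal F_0) = \mathcal End^{(0)}(\mathcal F_0) \oplus \kappa_\Lambda \cdot I$ of $\pi_1$-modules, which ensures that any coboundary in $\mathcal End$ landing in $\mathcal End^{(0)}$ is already a coboundary in $\mathcal End^{(0)}$. Hence the image of $\Phi$ lies in the kernel $K$ of the restriction $H^1(X-S, \mathcal End^{(0)}) \to \bigoplus_s H^1(\mathrm{Gal}(\bar\eta_s/\eta_s), \mathcal End^{(0)})$, and $\Phi$ surjects onto $K$ by trivializing each local class by an appropriate $Q_s$. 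The kernel of $\Phi$, after normalizing to $c = 0$, becomes $\bigoplus_s \mathrm{End}(\mathcal F_0|_{\eta_s}) / \mathrm{End}(\mathcal F_0)$ with the diagonal embedding.

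Combining the resulting short exact sequence $0 \to \ker\Phi \to F^\lambda(\kappa_\Lambda[\epsilon]) \to K \to 0$ with the localization long exact sequence
$$0 \to H^0(X-S, \mathcal End^{(0)}) \to \bigoplus_s H^0(\mathrm{Gal}(\bar\eta_s/\eta_s), \mathcal End^{(0)}) \to H^1_c(X-S, \mathcal End^{(0)}) \to K \to 0$$
(using $H^0_c = 0$ on the affine curve $X-S$) together with the scalar-summand identities $\dim \mathrm{End}(\mathcal F_0|_{\eta_s}) = \dim H^0(\mathrm{Gal}(\bar\eta_s/\eta_s), \mathcal End^{(0)}) + 1$ and $\dim \mathrm{End}(\mathcal F_0) = \dim H^0(X-S, \mathcal End^{(0)}) + 1$, the terms telescope to yield the first formula $\dim F^\lambda(\kappa_\Lambda[\epsilon]) = \dim H^1_c(X-S, \mathcal End^{(0)}) + |S| - 1$. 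For the second equality, I would split $\mathcal End = \mathcal End^{(0)} \oplus \kappa_\Lambda$ and compute $\dim H^1_c(X-S, \kappa_\Lambda) = 2g + |S| - 1$ from $\chi_c(X-S, \kappa_\Lambda) = 2 - 2g - |S|$ together with $h^0_c = 0$ and $h^2_c = 1$, so the two expressions match. The main bookkeeping obstacle throughout is the coherent management of the trace-zero constraint against the full $\mathcal End$, which is precisely what makes the hypothesis $r \in \kappa_\Lambda^\times$ indispensable.
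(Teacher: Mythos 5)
Your proposal is correct and follows essentially the same route as the paper: both compute the Zariski tangent space via the explicit cocycle description $(c,(Q_s))$ with equivalence by a global coboundary, and both reduce the count to $H^1_c$ via the localization triangle $j_!\to Rj_*\to\Delta$, using the splitting $\mathcal End=\mathcal End^{(0)}\oplus\kappa_\Lambda$ when $r$ is invertible. The only difference is organizational: you package the count in a short exact sequence $0\to\ker\Phi\to F^\lambda(\kappa_\Lambda[\epsilon])\to K\to 0$ for the forgetful map $\Phi$ to $H^1(X-S,\mathcal End^{(0)})$, whereas the paper works directly at the cocycle level by computing $\dim U$, fibering over it by $H^0(\mathrm{Gal}(\bar\eta_s/\eta_s),\mathrm{Ad}(\rho_0))$-cosets, and then quotienting by the free action of $\mathrm{End}(\kappa_\Lambda^r)$; the resulting bookkeeping telescopes to the same expression.
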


\begin{proof}
Let $\mathrm{Ad}(\rho_0)$ be the ${\kappa_\Lambda}$-vector space $\mathrm{End}(\kappa_\Lambda^r)$ of $r\times
r$ matrices with entries in ${\kappa_\Lambda}$ on which $\pi_1(X-S,\bar \eta)$
and $\mathrm{Gal}(\bar\eta_s/\eta_s)$ act by the composition of
$\rho_0$ with the adjoint representation of $\mathrm{GL}(\kappa_\Lambda^r)$, let 
$\mathrm{Ad}^{(0)}(\rho_0)$ be the subspace of $\mathrm{Ad}(\rho_0)$
consisting of matrices of trace $0$, and let $Z^1(\pi_1(X-S,\eta),\mathrm{Ad}^{(0)}(\rho_0))$ 
(resp. $B^1(\pi_1(X-S,\eta),\mathrm{Ad}^{(0)}(\rho_0))$) be
the group of 1-cocycles (resp. 1-coboundaries). 
Consider a tuple $(\rho,(P_s)_{s\in S})$ in $F^\lambda({\kappa_\Lambda}[\epsilon])$, and write
$$\rho(g)=\rho_0(g)+\epsilon M(g)\rho_0(g), \quad P_s=P_{0,s}+\epsilon Q_sP_{0,s}$$
for some $(r\times r)$-matrices $M(g)$ and $Q_s$ with entries in
${\kappa_\Lambda}$. The condition that $\rho$ is a representation with the property $\mathrm{det}(\rho)=\lambda$
is equivalent to saying that  the map $$\pi_1(X-S,\bar\eta)\to \mathrm{End}^{(0)}(\kappa_\Lambda^r),\quad g\mapsto M(g)$$ 
is a 1-cocycle for $\mathrm{Ad}^{(0)}(\rho_0)$, and the condition 
$P_s^{-1}\rho|_{\mathrm{Gal}(\bar\eta_s/\eta_s)} P_s=\rho_\Lambda|_{\mathrm{Gal}
(\bar\eta_s/\eta_s)}$ is equivalent to $$M|_{\mathrm{Gal}(\bar\eta_s/\eta_s)}+dQ_s=0.$$ 
Let $U$ be the kernel of the composite of the canonical homomorphisms
$$Z^1(\pi_1(X-S, \bar\eta), \mathrm{Ad}^{(0)}(\rho_0))\to H^1(\pi_1(X-S, \bar\eta), \mathrm{Ad}^{(0)}(\rho_0))\to \bigoplus_{s\in S} 
H^1(\mathrm{Gal}(\bar\eta_s/\eta_s),\mathrm{Ad}(\rho_0)).$$ 
Then $M$ lies in $U$. We have 
\begin{eqnarray*}
\mathrm{dim}\, U
&=&\mathrm{dim}\,\mathrm{ker}\Big( H^1(\pi_1(X-S, \bar\eta), \mathrm{Ad}^{(0)}(\rho_0))\to \bigoplus_{s\in S} 
H^1(\mathrm{Gal}(\bar\eta_s/\eta_s),\mathrm{Ad}(\rho_0))\Big)\\
&&\quad +\mathrm{dim}\,B^1(\pi_1(X-S, \bar\eta), \mathrm{Ad}^{(0)}(\rho_0))\\
&=& \mathrm{dim}\,\mathrm{ker}\Big(H^1(\pi_1(X-S, \bar\eta), \mathrm{Ad}^{(0)}(\rho_0))\to \bigoplus_{s\in S} 
H^1(\mathrm{Gal}(\bar\eta_s/\eta_s),\mathrm{Ad}(\rho_0))\Big)
\\&&\quad +\mathrm{dim}\,\mathrm{End}^{(0)}(\kappa_\Lambda^r)-\mathrm{dim}\, H^0(\pi_1(X-S, \bar\eta), \mathrm{Ad}^{(0)}(\rho_0)).
\end{eqnarray*}
Given $M\in U$, the set of matrices $Q_s$ such that $M|_{\mathrm{Gal}(\bar\eta_s/\eta_s)}+dQ_s=0$ 
form a space of dimension $\mathrm{dim}\, H^0(\mathrm{Gal}(\bar\eta_s/\eta_s), \mathrm{Ad}(\rho_0)).$
It follows that the set of tuples $(M, (Q_s)_{s\in S})$ with $M:\pi_1(X-S,\bar\eta)\to \mathrm{End}^{(0)}(\kappa_\Lambda^r)$ being a  1-cocycle 
and  $M|_{\mathrm{Gal}(\bar\eta_s/\eta_s)}+dQ_s=0$ form a vector space of dimension 
\begin{eqnarray*}
&&\mathrm{dim}\,\mathrm{ker}\Big(H^1(\pi_1(X-S, \bar\eta), \mathrm{Ad}^{(0)}(\rho_0))\to \bigoplus_{s\in S} 
H^1(\mathrm{Gal}(\bar\eta_s/\eta_s),\mathrm{Ad}(\rho_0))\Big)
\\&&\quad +\mathrm{dim}\,\mathrm{End}^{(0)}(\kappa_\Lambda^r)-\mathrm{dim}\, H^0(\pi_1(X-S, \bar\eta), \mathrm{Ad}^{(0)}(\rho_0))
+\sum_{s\in S} \mathrm{dim}\,  H^0(\mathrm{Gal}(\bar\eta_s/\eta_s),\mathrm{Ad}(\rho_0)).
\end{eqnarray*}
Given two tuples  $(\rho^{(i)},(P^{(i)}_s)_{s\in S})$ $(i=1,2)$ in $F^\lambda({\kappa_\Lambda}[\epsilon])$, 
write
$$\rho^{(i)}(g)=\rho_0(g)+\epsilon M^{(i)}(g)\rho_0(g), \quad P^{(i)}_s=P_{0,s}+\epsilon Q^{(i)}_sP_{0,s}.$$
These two tuples are equivalent if and only if there exists a matrix $P=I+\epsilon Q$ such that 
$$M^{(1)}-M^{(2)}=dQ,\quad Q_s^{(1)}=Q_s^{(2)}-Q.$$ So the set $F^\lambda({\kappa_\Lambda}[\epsilon])$
of equivalent classes of tuples $(\rho,(P_s)_{s\in S})$ form a vector space of dimension 
\begin{eqnarray*}
\mathrm{dim}\,
F^\lambda({\kappa_\Lambda}[\epsilon])&=&\mathrm{dim}\,\mathrm{ker}\Big(H^1(\pi_1(X-S, \bar\eta), \mathrm{Ad}^{(0)}(\rho_0))\to \bigoplus_{s\in S} 
H^1(\mathrm{Gal}(\bar\eta_s/\eta_s),\mathrm{Ad}(\rho_0))\Big)
\\&&\quad +\mathrm{dim}\,\mathrm{End}^{(0)}(\kappa_\Lambda^r)-\mathrm{dim}\, H^0(\pi_1(X-S, \bar\eta), \mathrm{Ad}^{(0)}(\rho_0))
+\sum_{s\in S} \mathrm{dim}\,  H^0(\mathrm{Gal}(\bar\eta_s/\eta_s),\mathrm{Ad}(\rho_0))\\
&&\quad -\mathrm{dim}\, \mathrm{End}(\kappa_\Lambda^r).
\end{eqnarray*}
In particular, $F^\lambda({\kappa_\Lambda}[\epsilon])$ is finite dimensional.
If $r$ is invertible in $\kappa_\Lambda$, then we have 
$\mathrm{Ad}(\rho_0)\cong \mathrm{Ad}^{(0)}(\rho_0)\oplus {\kappa_\Lambda}$, and the above expression can be written as  
\begin{eqnarray*}
\mathrm{dim}\, F^\lambda({\kappa_\Lambda}[\epsilon])
&=& \mathrm{dim}\,\mathrm{ker}\Big(H^1(\pi_1(X-S, \bar\eta), \mathrm{Ad}^{(0)}(\rho_0))\to \bigoplus_{s\in S} 
H^1(\mathrm{Gal}(\bar\eta_s/\eta_s),\mathrm{Ad}^{(0)}(\rho_0))\Big)\\
&&\quad -\mathrm{dim}\, H^0(\pi_1(X-S, \bar\eta), \mathrm{Ad}^{(0)}(\rho_0))+\sum_{s\in S} \mathrm{dim}\,  H^0(\mathrm{Gal}(\bar\eta_s/\eta_s),\mathrm{Ad}^{(0)}(\rho_0))
+|S|-1
\\
&=&\mathrm{dim}\,\mathrm{ker}\Big( H^1(X-S, \mathcal End^{(0)}(\mathcal F_0))\to \bigoplus_{s\in S} H^1(\eta_s,
\mathcal End^{(0)}(\mathcal F_0|_{\eta_s}))\Big)\\&&\quad -\mathrm{dim}\, H^0(X-S, \mathcal End^{(0)}(\mathcal F_0))
+\sum_{s\in S} \mathrm{dim}\,  H^0(\eta_s,\mathcal End^{(0)}(\mathcal F_0|_{\eta_s}))
+|S|-1.
\end{eqnarray*} Here for the second equality, we use the fact that 
$$H^1(\pi_1(X-S,\eta),\mathrm{Ad}^{(0)}(\rho_0))\cong
H^1(X-S,\mathcal End^{(0)}(\mathcal F_0))$$ by \cite[Lemma 1.6]{F}.
Let $\Delta$ be the mapping cone of the canonical morphism $j_!\mathcal End^{(0)}(\mathcal F_0)\to Rj_\ast \mathcal End^{(0)}(\mathcal F_0)$. 
Then we have $\mathcal H^i(\Delta)=0$ for $i\not=0,1$ and $\mathcal H^i(\Delta)$ are sky-scrapper sheaves supported on $S$
with 
$$(\mathcal H^i(\Delta))_{\bar s}\cong H^i(\eta_s, \mathcal End^{(0)}(\mathcal F_0|_{\eta_s})).$$ 
Taking the long exact sequence of cohomology groups for the distinguished triangle 
$$
j_! \mathcal End^{(0)}(\mathcal F_0)\to Rj_* \mathcal End^{(0)}(\mathcal F_0)\to \Delta\to \eqno(2.1)
$$
and taking into account of the fact that $H^0_c(X-S, \mathcal End^{(0)}(\mathcal F_0))=0$ (since $X-S$ is affine), we 
get a long exact sequence 
\begin{eqnarray*}
&& 0\to H^0(X-S, \mathcal End^{(0)}(\mathcal F_0))\to \bigoplus_{s\in S} H^0(\eta_s,  \mathcal End^{(0)}(\mathcal F_0|_{\eta_s}))
\to 
H^1_c(X-S, \mathcal End^{(0)}(\mathcal F_0))\\
&&\qquad \to \mathrm{ker}\Big(H^1(X-S, \mathcal End^{(0)}(\mathcal F_0))\to  \bigoplus_{s\in S} H^1(\eta_s,  \mathcal End^{(0)}(\mathcal F_0|_{\eta_s}))\Big)\to 0.
\end{eqnarray*}
It follows that $$\mathrm{dim}\,F^\lambda ({\kappa_\Lambda}[\epsilon])=\mathrm{dim}\, H_c^1(X-S, \mathcal End^{(0)}(\mathcal F_0))+|S|-1.$$
We have 
$
\mathrm{dim}\,H_c^1(X-S, {\kappa_\Lambda})
=2g-1+|S|.$
So we have
\begin{eqnarray*}
\mathrm{dim}\, F^\lambda({\kappa_\Lambda}[\epsilon])&=&
\mathrm{dim}\, H_c^1(X-S, \mathcal End^{(0)}(\mathcal F_0))+\mathrm{dim}\,H_c^1(X-S, {\kappa_\Lambda})-2g\\
&=& \mathrm{dim}\,H_c^1(X-S, \mathcal End(\mathcal F_0))-2g.
\end{eqnarray*}
\end{proof}

Let $A'\to A$ be an epimorphism in the category $\mathcal C_\Lambda$
such that its kernel $\mathfrak a$ has the property $\mathfrak
m_{A'}\mathfrak a=0$. We can regard $\mathfrak a$ as a vector space
over ${\kappa_\Lambda}\cong\ A'/\mathfrak m_{A'}$. Let
$\rho:\pi_1(X-S,\bar\eta)\to \mathrm{GL}(A^r)$ be a representation
such that $\rho\mod \mathfrak m_A=\rho_0$. By \cite[Lemma
2.1]{F}, we have
$H^i(\pi_1(X-S,\bar\eta),\mathrm{Ad}(\rho_0)\otimes_{\kappa_\Lambda}\mathfrak a)=0$ for all $i\geq 2$. Since the obstruction classes to lifting 
$\rho$ lies in $H^2(\pi_1(X-S, \bar\eta), \mathrm{Ad}(\rho_0)\otimes_{\kappa_\Lambda}\mathfrak a)=0$, 
the representation $\rho$ can always be
lifted to a representation $\rho':\pi_1(X-S,\bar\eta)\to
\mathrm{GL}(A'^r)$. Let $P_s\in \mathrm{GL}(A^r)$ $(s\in S)$ such that $P_s\mod \mathfrak m_A=P_{s,0}$,
and let 
$\rho'_s: \mathrm{Gal}(\bar\eta_s/\eta_s)\to \mathrm{GL}(A'^r)$ $(s\in S)$ be 
representations so that $$\rho'_s\mod \mathfrak a=P_s^{-1}\rho|_{\mathrm{Gal}(\bar\eta_s/\eta_s)}P_s.$$ 
Choose liftings $P_s'\in \mathrm{GL}(A'^r)$ for $P_s$. Then each
$P_s'\rho'_sP_s'^{-1}$ is a lifting of
$\rho|_{\mathrm{Gal}(\bar\eta_s/\eta_s)}$. Now
$\rho'|_{\mathrm{Gal}(\bar\eta_s/\eta_s)}$ is also a lifting of
$\rho|_{\mathrm{Gal}(\bar\eta_s/\eta_s)}$. The continuous map $\delta_s:
\mathrm{Gal}(\bar\eta_s/\eta_s)\to
\mathrm{End}(\kappa_\Lambda^r)\otimes_{\kappa_\Lambda}\mathfrak a$ defined by
$$\rho'(g)=P_s'\rho'_s(g)P_s'^{-1}+\delta_s(g) P_s'\rho'_s(g)P_s'^{-1}\quad (g\in
\mathrm{Gal}(\bar\eta_s/\eta_s))$$ is a 1-cocycle. Let $[\delta_s]$
be the cohomology class of $\delta_s$ in
$H^1(\mathrm{Gal}(\bar\eta_s/\eta_s),\mathrm{Ad}(\rho_0)\otimes_{\kappa_\Lambda}\mathfrak
a)$ and let $c$ be the image of $([\delta_s])_{s\in S}$ in the
cokernel of the canonical homomorphism
$$H^1(\pi_1(X-S,\bar\eta), \mathrm{Ad}(\rho_0)\otimes_{\kappa_\Lambda}\mathfrak a)
\to \bigoplus_{s\in S}H^1(\mathrm{Gal}(\bar\eta_s/\eta_s),
\mathrm{Ad}(\rho_0)\otimes_{\kappa_\Lambda}\mathfrak a).$$ Using the long exact sequence of cohomology groups 
associated 
to the distinguished triangle (2.1) with $\mathcal End^{(0)}(\mathcal F_0)$ replaced by
$\mathcal End(\mathcal F_0)$, we can show the above cokernel can be considered as a subspace of
$H^2(X,j_\ast\mathcal  End(\mathcal  F_0))\otimes_{\kappa_\Lambda}\mathfrak
a$. So we can also regard
$c$ as an element of in $H^2(X,j_\ast\mathcal End(\mathcal
F_0))\otimes_{\kappa_\Lambda}\mathfrak a$. We call $c$ the \emph{obstruction
class to lifting $(\rho, (P_s)_{s\in S})$ with the prescribed local
data} $(\rho'_s)_{s\in S}$. For simplicity, in the sequel we 
call $c$ the obstruction class to lifting $\rho$. It is straightforward to show that
$c$ is independent of the choices of $\rho'$
and of $P_s'$, and that $c$
vanishes if and only if $(\rho, (P_s)_{s\in S})$ can be lifted to a
tuple $(\rho'',(P_s'')_{s\in S})$ such that
$\rho'':\pi_1(X-S,\bar\eta)\to\mathrm{GL}(A'^r)$ is a representation
lifting $\rho$, $P_s''\in\mathrm{GL}(A'^r)$ lift $P_s$ and
$P_s^{\prime\prime -1}\rho''|_{\mathrm{Gal}(\bar\eta_s/\eta_s)}
P^{\prime\prime}_s=\rho'_s$ for all $s\in S$.
Note that we have
\begin{eqnarray*}
\mathrm{det}(\rho'(g))=\mathrm{det}(\rho'_s(g))+\mathrm{Tr}(\delta_s(g))\mathrm{det}(\rho'_s(g)) \quad (g\in\mathrm{Gal}(\bar\eta_s/\eta_s).)
\end{eqnarray*}
It follows that the obstruction class to lifting
$\mathrm{det}(\rho)$ is the image of the obstruction class to
lifting $\rho$ under the homomorphism
$$H^2(X,j_\ast\mathcal  End(\mathcal  F_0))\otimes_{\kappa_\Lambda}\mathfrak a\to
H^2(X,{\kappa_\Lambda})\otimes_{\kappa_\Lambda}\mathfrak a$$ induced by
$\mathrm{Tr}:\mathcal  End(\mathcal  F_0)\to {\kappa_\Lambda}.$

\begin{lemma} \label{obstruction2} Notation as above. Suppose all elements in
$\mathrm{End}_{\pi_1(X-S,\bar\eta)}(\kappa_\Lambda^r)$ are scalar
multiplications and suppose $\mathrm{det}(\rho)$ can be lifted to a
representation $\lambda':\pi_1(X-S,\bar\eta)\to\mathrm{GL}(A')$
with the property
$\lambda'|_{\mathrm{Gal}(\bar\eta_s/\eta_s)}=\mathrm{det}\,\rho'_s$.
Then the tuple $(\rho, (P_s)_{s\in S})$ can be lifted to a tuple $(\rho',(P_s')_{s\in S})$
such that $\rho':\pi_1(X-S,\bar\eta)\to
\mathrm{GL}(A'^r)$ is a representation and 
$P_s^{\prime -1}\rho'|_{\mathrm{Gal}(\bar\eta_s/\eta_s)}
P_s^\prime=\rho'_s$ for all $s\in S$.
\end{lemma}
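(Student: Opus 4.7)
The plan is to show that the obstruction class $c\in H^2(X,j_\ast\mathcal End(\mathcal F_0))\otimes_{\kappa_\Lambda}\mathfrak a$ to lifting $(\rho,(P_s)_{s\in S})$ with the prescribed local data $(\rho'_s)_{s\in S}$ vanishes; by the defining property of $c$ recalled in the paragraph preceding the statement, this vanishing will deliver the required tuple $(\rho',(P'_s)_{s\in S})$.

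First I invoke the functoriality already established just before the lemma: under the map $H^2(X,j_\ast\mathcal End(\mathcal F_0))\otimes\mathfrak a\to H^2(X,\kappa_\Lambda)\otimes\mathfrak a$ induced by the trace $\mathrm{Tr}:\mathcal End(\mathcal F_0)\to \kappa_\Lambda$, the class $c$ is sent to the obstruction for lifting $\mathrm{det}(\rho)$ with local data $\mathrm{det}(\rho'_s)_{s\in S}$. The hypothesis that $\lambda'$ provides such a lift says precisely that this determinant obstruction is zero, so $\mathrm{Tr}_\ast(c)=0$ in $H^2(X,\kappa_\Lambda)\otimes_{\kappa_\Lambda}\mathfrak a$.

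It therefore suffices to prove that $\mathrm{Tr}_\ast:H^2(X,j_\ast\mathcal End(\mathcal F_0))\to H^2(X,\kappa_\Lambda)$ is injective. The scalar-endomorphism hypothesis gives $H^0(X,j_\ast\mathcal End(\mathcal F_0))=\mathrm{End}_{\pi_1(X-S,\bar\eta)}(\kappa_\Lambda^r)=\kappa_\Lambda\cdot I$, which is one-dimensional. The trace pairing $(A,B)\mapsto\mathrm{Tr}(AB)$ makes $\mathcal End(\mathcal F_0)$ self-dual, with $\mathrm{Tr}$ dual to the unit $e:\kappa_\Lambda\hookrightarrow \mathcal End(\mathcal F_0)$, $1\mapsto I$. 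Poincar\'e duality for middle extensions on the smooth proper curve $X$ (\cite[1.3, 2.2]{Ddualite}) then identifies $\mathrm{Tr}_\ast$ on $H^2$ with the $\kappa_\Lambda$-linear dual of $e_\ast:H^0(X,\kappa_\Lambda(1))\to H^0(X,j_\ast\mathcal End(\mathcal F_0)(1))$. The target of $e_\ast$ is again $\kappa_\Lambda(1)\cdot I$ by the scalar hypothesis, both source and target are one-dimensional, and $e_\ast(1)=I\neq 0$, so $e_\ast$ is an isomorphism. Dualizing, $\mathrm{Tr}_\ast$ is an isomorphism on $H^2$, forcing $c=0$.

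The heart of the argument is the duality step: once one knows $H^0(X,j_\ast\mathcal End(\mathcal F_0))=\kappa_\Lambda\cdot I$, Poincar\'e duality makes $H^2(X,j_\ast\mathcal End(\mathcal F_0))$ one-dimensional and paired non-trivially against $H^2(X,\kappa_\Lambda)$ via trace, after which the entire obstruction analysis collapses to the determinant-lifting hypothesis encoded by $\lambda'$. I do not anticipate other serious difficulties; the remaining content is bookkeeping with the explicit cocycle formula for $c$ and verifying that the naturality under $\mathrm{Tr}$ used in the first step is the same as the one already recorded in the paragraph preceding the lemma.
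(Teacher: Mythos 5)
Your proof is correct and takes essentially the same route as the paper: both reduce to showing that $\mathrm{Tr}_\ast\colon H^2(X,j_\ast\mathcal End(\mathcal F_0))\to H^2(X,\kappa_\Lambda)$ is an isomorphism by Poincar\'e duality (with the unit map $\kappa_\Lambda\to\mathcal End(\mathcal F_0)$ inducing an isomorphism on $H^0$ thanks to the scalar-endomorphism hypothesis), and then conclude from the vanishing of the determinant obstruction. You merely spell out the duality step in a bit more detail than the paper's terse statement.
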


\begin{proof} If all elements in
$\mathrm{End}_{\pi_1(X-S,\bar\eta)}(\kappa_\Lambda^r)$ are scalar
multiplications, then the morphism
$${\kappa_\Lambda}\to\mathcal  End(\mathcal  F_0),\quad a\mapsto a\mathrm{Id}$$ induces an
isomorphism
$$H^0(X,{\kappa_\Lambda})\cong H^0(X,j_\ast\mathcal  End(\mathcal  F_0)).$$
By the Poincar\'e duality
(\cite[1.3 and 2.2]{Ddualite}), the morphism $\mathrm{Tr}:\mathcal  End(\mathcal  F_0)\to {\kappa_\Lambda}$
induces an isomorphism
$$H^2(X,j_\ast\mathcal  End(\mathcal  F_0))\stackrel\cong\to H^2(X,{\kappa_\Lambda}).$$
This last isomorphism maps the obstruction class
to lifting $\rho$ to the obstruction class to lifting
$\mathrm{det}(\rho)$. By our assumption, there is no obstruction to
lifting $\mathrm{det}(\rho)$. It follows that there is no
obstruction to lifting $(\rho,(P_s)_{s\in S})$ with the prescribed local data $(\rho'_s)$.
\end{proof}

\begin{proof}[Proof of Lemma \ref{fiber} (ii)] Suppose $(\rho, (P_s)_{s\in S})$ defines an element of
$F^\lambda(A)$. There is no obstruction to lifting $\mathrm{det}(\rho)=\lambda$ 
since $\lambda$ takes its value in the ground coefficient ring $\Lambda$.
By Lemma \ref{obstruction2}, we can
lift $(\rho,(P_s)_{s\in S})$ to a tuple $(\rho',(P'_s)_{s\in S})$ such that
$P_s'^{-1} \rho'|_{\mathrm{Gal}(\bar\eta_s/\eta_s)}P'_s=\rho_\Lambda|_{\mathrm{Gal}(\bar\eta_s/\eta_s)}$. We then
have
$$\mathrm{det}(\rho')|_{\mathrm{Gal}(\bar\eta_s/\eta_s)}=\lambda|_{\mathrm{Gal}(\bar\eta_s/\eta_s)}.$$ In particular,
$\lambda^{-1}\mathrm{det}(\rho')$ is unramified at each $s\in S$. It is
also unramified on $X-S$. So $\lambda^{-1}\mathrm{det}(\rho')$ defines a character $\chi:\pi_1(X,\bar\eta)\to
A^{\prime *}.$
Note that we have $$\lambda^{-1}\mathrm{det}(\rho')\mod \mathfrak a=\lambda^{-1}\mathrm{det}(\rho)=1.$$
So the image of $\chi$ lies in the subgroup
$1+\mathfrak m_{A'}$ of $A^{\prime\ast}$. This subgroup has a filtration
$$1+\mathfrak m_{A'}\supset 1+\mathfrak m_{A'}^2\supset \cdots.$$ For each $i$,
we have an isomorphism of groups
$\mathfrak m_{A'}^i/\mathfrak m_{A'}^{i+1}\cong
(1+\mathfrak m_{A'}^i)/(1+\mathfrak m_{A'}^{i+1}),$ and $\mathfrak
m_{A'}^i/\mathfrak m_{A'}^{i+1}$ is the underlying abelian group of a
finite dimensional vector space over ${\kappa_\Lambda}$. Any
profinite subgroup of a finite dimensional ${\kappa_\Lambda}$-vector space must be
a pro-$\ell$-group. It follows that the representation
$\chi:\pi_1(X,\bar\eta)\to A^{\prime\ast}$ must factor through the maximal
abelian pro-$\ell$-quotient $\Gamma$ of $\pi_1(X,\bar\eta)$. By \cite[X
3.10]{SGA1}, we have 
$\Gamma\cong\mathbb Z_\ell^{2g}$. Since $r$ is invertible in $\kappa_\Lambda$, 
any element in $1+\mathfrak m_{A'}$ has an 
$r$-th root lying in $1+\mathfrak m_{A'}$. So any character 
$\mathbb Z_\ell^{2g}\to 1+\mathfrak m_{A'}$ is the $r$-th power of another such character.  We 
can thus find a character $\chi': \pi_1(X,\bar\eta)\to 1+\mathfrak m_{A'}$ such that $\chi'^r=\chi$. Then 
$$\mathrm{det}(\rho'\chi'^{-1})=\mathrm{det}(\rho')\chi'^{-r}=\mathrm{det}(\rho')\chi^{-1}=\lambda.$$ 
So $(\rho'\chi'^{-1}, (P'_s)_{s\in S})$ defines an element of
$F^\lambda(A')$ lifting $(\rho, (P_s)_{s\in S})$. Therefore $F^\lambda$ is smooth.
\end{proof}

\section{Proof of Deligne's Lemma \ref{conj}}

\begin{lemma}\label{little} Let $E$ be a finite extension of $\mathbb Q_\ell$, let $\pi$
be a uniformizer for the integer ring $\mathcal O$ of $E$, let $\mathcal F_{\mathcal O}$ 
be a torsion free lisse $\mathcal O$-sheaf
on $X-S$, and let $\mathcal F_E=\mathcal F_{\mathcal O}\otimes_{\mathcal O}E$. 
Suppose ${\mathcal F_E}$ is absolutely irreducible. 

(i) There exists a natural  number $N$ such that for any homomorphism 
$\phi:\mathcal F_{\mathcal O}/\pi^N\mathcal F_{\mathcal O}\to \mathcal F_{\mathcal O}/\pi^N\mathcal F_{\mathcal O},$
there exists a scalar $a\in\mathcal O$ such that modulo $\pi$, $\phi$ coincides with the
scalar multiplication by $a$ on $\mathcal F_{\mathcal O}/\pi\mathcal F_{\mathcal O}$.

(ii) Let $\mathcal E_{\mathcal O}$ be a torsion free lisse $\mathcal O$-sheave on $X-S$ of the same rank as $\mathcal F_{\mathcal O}$
such that $\mathcal E_{\mathcal O}\otimes_{\mathcal O}\overline{\mathbb Q}_\ell$ is not isomorphic to 
$\mathcal F_{\mathcal O}\otimes_{\mathcal O}\overline{\mathbb Q}_\ell.$ 
There exists a natural  number $N$ such that any homomorphism 
$\mathcal E_{\mathcal O}
/\pi^N\mathcal E_{\mathcal O}\to \mathcal F_{\mathcal O}/\pi^N\mathcal F_{\mathcal O}
$ vanishes modulo $\pi$. 
\end{lemma}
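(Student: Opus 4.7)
\textbf{Plan for Lemma \ref{little}.} The strategy for both parts is to analyze the long exact sequence of $\pi_1(X-S,\bar\eta)$-cohomology associated to
$$0\to\mathcal H\xrightarrow{\pi^N}\mathcal H\to\mathcal H/\pi^N\mathcal H\to 0,$$
where $\mathcal H=\mathcal End(\mathcal F_{\mathcal O})$ for (i) and $\mathcal H=\mathcal Hom(\mathcal E_{\mathcal O},\mathcal F_{\mathcal O})$ for (ii), combined with the fact that $H^1(X-S,\mathcal H)$ is a finitely generated $\mathcal O$-module (a standard property of $\ell$-adic cohomology of lisse $\mathcal O$-sheaves on a smooth quasi-projective variety), so that its torsion subgroup is annihilated by a fixed power of $\pi$.

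For (i), absolute irreducibility of $\mathcal F_E$ together with Schur's lemma gives $\mathrm{End}_{\pi_1(X-S,\bar\eta)}(\mathcal F_E)=E\cdot\mathrm{Id}$, and intersecting with the $\pi_1$-equivariant endomorphisms preserving the lattice $\mathcal F_{\mathcal O}$ shows $M:=H^0(X-S,\mathcal End(\mathcal F_{\mathcal O}))=\mathcal O\cdot\mathrm{Id}$. Setting $M_N=\mathrm{End}_{\pi_1}(\mathcal F_{\mathcal O}/\pi^N\mathcal F_{\mathcal O})$ and $H^1=H^1(X-S,\mathcal End(\mathcal F_{\mathcal O}))$, the long exact sequence yields
$$0\to M/\pi^N M\to M_N\xrightarrow{\delta_N} H^1[\pi^N]\to 0.$$
Functoriality of the long exact sequence applied to the evident morphism between the short exact sequences for $\pi^N$ and for $\pi$ (identity in the middle, natural reduction on the right, multiplication by $\pi^{N-1}$ on the left) gives $\delta_1(\mathrm{red}(x))=\pi^{N-1}\delta_N(x)$ for any $x\in M_N$. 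Choosing $N_1$ with $\pi^{N_1}H^1_{\mathrm{tor}}=0$ and taking $N\geq N_1+1$ forces $\delta_1(\mathrm{red}(x))=0$, so $\mathrm{red}(x)$ lies in the image of $M/\pi M\hookrightarrow M_1$, which is $\kappa\cdot\mathrm{Id}$. Any lift $a\in\mathcal O$ of the resulting scalar satisfies the claim.

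For (ii), repeat the argument with $\mathcal H=\mathcal Hom(\mathcal E_{\mathcal O},\mathcal F_{\mathcal O})$. The key input is $H^0(X-S,\mathcal H)=0$: any nonzero $\pi_1$-equivariant morphism $\mathcal E_E\to\mathcal F_E$ is surjective by irreducibility of $\mathcal F_E$, hence an isomorphism since $\mathcal E_E$ and $\mathcal F_E$ have equal rank, which after base change to $\overline{\mathbb Q}_\ell$ contradicts $\mathcal E_{\mathcal O}\otimes_{\mathcal O}\overline{\mathbb Q}_\ell\not\cong\mathcal F_{\mathcal O}\otimes_{\mathcal O}\overline{\mathbb Q}_\ell$. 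So $H^0(X-S,\mathcal H)\otimes E=0$, and being torsion-free over $\mathcal O$ it vanishes. The long exact sequence now gives $H_N:=\mathrm{Hom}_{\pi_1}(\mathcal E_{\mathcal O}/\pi^N,\mathcal F_{\mathcal O}/\pi^N)\xrightarrow{\sim}H^1[\pi^N]$ with $H^1=H^1(X-S,\mathcal H)$ finitely generated, and the same functoriality argument identifies the reduction $H_N\to H_1$ with multiplication by $\pi^{N-1}$ on $H^1[\pi^N]$, which vanishes once $N-1$ exceeds the exponent of $H^1_{\mathrm{tor}}$.

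The only real issue is the cochain-level functoriality bookkeeping producing the factor $\pi^{N-1}$ in the comparison of the two connecting maps; the remaining ingredients (Schur's lemma, the rank argument, and finite generation of $\ell$-adic cohomology) are standard.
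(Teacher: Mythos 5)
Your proof is correct, but it takes a genuinely different route from the paper's. You deduce both parts from the long exact sequence in (continuous / \'etale) cohomology attached to $0\to\mathcal H\xrightarrow{\pi^N}\mathcal H\to\mathcal H/\pi^N\mathcal H\to 0$, the computation of $H^0$ via Schur's lemma, and the finite generation of $H^1(X-S,\mathcal H)$ as an $\mathcal O$-module; the key quantitative point is the functoriality identity $\delta_1\circ\mathrm{red}=\pi^{N-1}\circ\delta_N$ coming from the morphism of short exact sequences (multiplication by $\pi^{N-1}$, identity, reduction), which is standard and which you verify correctly. The paper instead uses an inverse-limit compactness argument: it defines $S_n$ to be the (finite) set of $\pi_1$-equivariant endomorphisms of $\mathcal F_{\mathcal O}/\pi^n$ (resp.\ homomorphisms $\mathcal E_{\mathcal O}/\pi^n\to\mathcal F_{\mathcal O}/\pi^n$) that fail to be scalar (resp.\ fail to vanish) modulo $\pi$, notes that the $S_n$ form an inverse system of finite sets, and observes that if all $S_n$ were nonempty then $\varprojlim S_n$ would be nonempty, yielding an endomorphism (resp.\ homomorphism) of $\mathcal F_{\mathcal O}$ (resp.\ from $\mathcal E_{\mathcal O}$) contradicting Schur's lemma. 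The paper's approach is more elementary: it needs only the finiteness of each $\mathrm{End}_{\pi_1}(\mathcal F_{\mathcal O}/\pi^n)$ and the K\H onig-lemma compactness of inverse limits of finite sets, and avoids invoking any cohomological finiteness theorem; in exchange it produces no explicit $N$. Your approach requires the finite generation of $H^1(X-S,\mathcal H)$ over $\mathcal O$ (true here, but a heavier input), and in return yields an effective bound: $N$ can be taken to be one more than the exponent of the $\pi$-power torsion of $H^1$. Both are correct; the paper's is leaner, yours is more in the spirit of standard deformation-theoretic obstruction arguments and more readily generalizes.
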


\begin{proof} For any natural number $n$, let $S_n$ be the subset of $\mathrm{End}(\mathcal F_{\mathcal O}/\pi^n\mathcal F_{\mathcal O})$ 
consisting of those endomorphisms $\phi_n$ such that $\phi_n\mod \pi$ are not scalar multiplications. Note that each $S_n$ is a finite
set, and we have a map $S_n\to S_{n-1}$ sending each $\phi_n$ in $S_n$ to $\phi_n \mod \pi^{n-1}$. 
If each $S_n$ is nonempty, then $\varprojlim_n S_n$ is nonempty. Choose an element $(\phi_n)\in\varprojlim_n S_n$. The family of 
compatible endomorphisms $\phi_n:\mathcal F_{\mathcal O}/\pi^n\mathcal F_{\mathcal O}\to \mathcal F_{\mathcal O}/\pi^n\mathcal F_{\mathcal O}$
induces an endomorphism $\phi:\mathcal F_{\mathcal O}\to\mathcal F_{\mathcal O}$ by passing to inverse limit. By Schur's lemma, 
$\mathrm{End}({\mathcal F_E})$ consists of scalar multiplications, so $\phi$ is necessarily a scalar multiplication by some element 
$a\in\mathcal O$. But then each $\phi_n$ is a scalar multiplication. This contradicts to the fact that $\phi_n\in S_n$. So there exists
an integer $N$ such that $S_N$ is empty. This proves (i).

The proof of (ii) is similar. We take $S_n$ to be the subset of $\mathrm{Hom}(\mathcal E_{\mathcal O}
/\pi^n \mathcal E_{\mathcal O}, \mathcal F_{\mathcal O}/\pi^n\mathcal F_{\mathcal O})
$ consisting of homomorphisms $\phi_n$ so that $\phi_n\mod\pi$ are not zero. By Schur's lemma, 
we have $\mathrm{Hom}(\mathcal E_E,\mathcal F_E)=0$. We use this fact to get contradiction as above if 
$S_n$ are nonempty for all $n$. 
\end{proof}

\begin{lemma}\label{big} Keep the assumption in Lemma \ref{little}. 
For any finite extension $E'$ of $E$, let $\mathcal O'$ be the integer ring of $E'$, 
let $\mathcal F_{\mathcal O'}=\mathcal F_{\mathcal O}\otimes_{\mathcal O}\mathcal O'$
and $\mathcal E_{\mathcal O'}=\mathcal E_{\mathcal O}\otimes_{\mathcal O}\mathcal O'$.

(i) There exists an integer 
$N$ depending only on $\mathcal F_{\mathcal O}$ such that for any endomorphism $\phi':
\mathcal F_{\mathcal O'}/\pi^N\mathcal F_{\mathcal O'}\to 
\mathcal F_{\mathcal O'}/\pi^N\mathcal F_{\mathcal O'}$, there exists a scalar $a\in\mathcal O'$ such  
that modulo $\pi$, $\phi'$ coincides with the
scalar multiplication by $a$ on $\mathcal F_{\mathcal O'}/\pi\mathcal F_{\mathcal O'}$.

(ii) There exists an integer 
$N$ depending only on $\mathcal F_{\mathcal O}$ and $\mathcal E_{\mathcal O}$ such that
any homomorphism $\mathcal E_{\mathcal O'}
/\pi^N\mathcal E_{\mathcal O'}\to \mathcal F_{\mathcal O'}/\pi^N\mathcal F_{\mathcal O'}
$ vanishes modulo $\pi$. 
\end{lemma}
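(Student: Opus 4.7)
The plan is to deduce Lemma \ref{big} directly from Lemma \ref{little} by a flat base change argument, taking the same integer $N$ that works in Lemma \ref{little}. Uniformity in the extension $E'/E$ will come from the flatness of $\mathcal O'/\pi^N\mathcal O'$ over $\mathcal O/\pi^N$, which is immediate from the fact that $\mathcal O'$ is free over $\mathcal O$.

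The main computation is to verify that forming $\pi_1(X-S,\bar\eta)$-equivariant $\mathrm{Hom}$ commutes with the base change $\mathcal O/\pi^N \to \mathcal O'/\pi^N\mathcal O'$. Starting from the identification
$$\mathcal F_{\mathcal O'}/\pi^N\mathcal F_{\mathcal O'} \;\cong\; \bigl(\mathcal F_{\mathcal O}/\pi^N\mathcal F_{\mathcal O}\bigr) \otimes_{\mathcal O/\pi^N} \mathcal O'/\pi^N\mathcal O'$$
(and likewise for $\mathcal E$), standard flat base change for $\mathrm{Hom}$ between finitely presented modules yields
$$\mathrm{Hom}_{\mathcal O'/\pi^N}\bigl(\mathcal F_{\mathcal O'}/\pi^N, \mathcal F_{\mathcal O'}/\pi^N\bigr) \;\cong\; \mathrm{Hom}_{\mathcal O/\pi^N}\bigl(\mathcal F_{\mathcal O}/\pi^N, \mathcal F_{\mathcal O}/\pi^N\bigr) \otimes_{\mathcal O/\pi^N} \mathcal O'/\pi^N\mathcal O'.$$
Because the continuous representation of $\pi_1(X-S,\bar\eta)$ on the finite module $\mathcal F_{\mathcal O}/\pi^N\mathcal F_{\mathcal O}$ factors through a finite quotient of $\pi_1(X-S,\bar\eta)$, the $\pi_1$-equivariance condition is cut out by finitely many $\mathcal O/\pi^N$-linear equations, and taking this finite intersection of kernels commutes with the flat base change. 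An identical statement holds for $\mathrm{Hom}_{\pi_1}(\mathcal E_{\mathcal O'}/\pi^N, \mathcal F_{\mathcal O'}/\pi^N)$.

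With $N$ chosen as in Lemma \ref{little}(i), every sheaf endomorphism $\phi'$ of $\mathcal F_{\mathcal O'}/\pi^N\mathcal F_{\mathcal O'}$ therefore admits a decomposition $\phi' = \sum_i \phi_i \otimes c_i$ with $\phi_i \in \mathrm{End}_{\pi_1}(\mathcal F_{\mathcal O}/\pi^N\mathcal F_{\mathcal O})$ and $c_i \in \mathcal O'/\pi^N\mathcal O'$. By Lemma \ref{little}(i), each $\phi_i \bmod \pi$ is scalar multiplication by some $a_i \in \mathcal O/\pi\mathcal O$, so $\phi' \bmod \pi$ is scalar multiplication by $\sum_i a_i c_i$, which lifts to an element $a \in \mathcal O'$, proving (i). For (ii), the same decomposition together with Lemma \ref{little}(ii) shows that each $\phi_i \bmod \pi = 0$, whence $\phi' \bmod \pi = 0$.

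The main technical obstacle will be justifying the flat base change for $\pi_1$-equivariant $\mathrm{Hom}$. The key point is that continuity of the mod-$\pi^N$ representation together with finiteness of $\mathrm{GL}_r(\mathcal O/\pi^N)$ forces the representation to factor through a finite quotient of $\pi_1(X-S,\bar\eta)$, so equivariance becomes a finite system of $\mathcal O/\pi^N$-linear constraints that behaves well under flat base change; once this is in place the rest of the argument is essentially formal.
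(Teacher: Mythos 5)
Your proof is correct and follows essentially the same route as the paper: both arguments exploit the freeness of $\mathcal O'/\pi^N\mathcal O'$ over $\mathcal O/\pi^N\mathcal O$ to decompose a $\pi_1$-equivariant endomorphism over $\mathcal O'$ into $\pi_1$-equivariant components over $\mathcal O$, then apply Lemma \ref{little} to each component. The paper phrases this concretely by choosing a basis $\{e_1,\ldots,e_m\}$ of $\mathcal O'/\pi^N\mathcal O'$ and writing the matrix of $\phi'$ as $\sum e_i\Phi_i$, deducing equivariance of each $\Phi_i$ from uniqueness of the decomposition, while you package the same idea as flat base change for equivariant $\mathrm{Hom}$; the matrix-level argument avoids having to justify that invariants commute with the base change, but the content is the same.
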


\begin{proof} We prove (i). The proof of (ii) is similar. 
Let's prove the integer $N$ in Lemma \ref{little} has the required property. 
Let $\rho_{\mathcal O}:\pi_1(X-S,\bar\eta)\to\mathrm{GL}(\mathcal O^r)$ be the representation 
corresponding to $\mathcal F_{\mathcal O}$. An endomorphism $\phi'$ of 
$\mathcal F_{\mathcal O'}/\pi^N\mathcal F_{\mathcal O'}$ corresponds to an endomorphism
of the $\pi_1(X-S,\bar\eta)$-module $(\mathcal O'/\pi^N\mathcal O')^r$ which we still denote by 
$\phi'$, where $\pi_1(X-S,\bar\eta)$ acts on $(\mathcal O'/\pi^N\mathcal O')^r$ via the representation 
$\rho_{\mathcal O}$. Note that $\mathcal O'$ is a free $\mathcal O$-module of finite rank. So $\mathcal O'/\pi^N\mathcal O'$
is a free $\mathcal O/\pi^N\mathcal O$-module of finite rank. Choose a basis $\{e_1,\ldots, e_m\}$ of 
$\mathcal O'/\pi^N\mathcal O'$ over $\mathcal O/\pi^N\mathcal O$.
Let $\Phi'$ be the matrix for the endmorphism $\phi'$ on $(\mathcal O'/\pi^N\mathcal O')^r$ 
with respect to the standard basis of $(\mathcal O'/\pi^N\mathcal O')^r$. The entries of 
$\Phi'$ lie in $\mathcal O'/\pi^N\mathcal O'$. We can 
write 
$$\Phi'=e_1\Phi_1+\cdots+ e_m\Phi_m$$ 
for some uniquely determined matrices $\Phi_i$ $(i=1,\ldots, m)$ with entries lying in $\mathcal O/\pi^N\mathcal O$.
For any $g\in\pi_1(X-S,\bar\eta)$, we have $\Phi' \rho_{\mathcal O}(g)=\rho_{\mathcal O}(g)\Phi'$, that is, 
$$e_1\Phi_1\rho_{\mathcal O}(g)+\cdots+e_m\Phi_m\rho_{\mathcal O}(g)=e_1\rho_{\mathcal O}(g)\Phi_1+\cdots
+e_m\rho_{\mathcal O}(g)\Phi_m.$$
This implies that $\Phi_i\rho_{\mathcal O}(g)=\rho_{\mathcal O}(g)\Phi_i$ for each $i$. 
So $\Phi_i$ corresponds to an endomorphism of the $\pi_1(X-S,\bar\eta)$-module $(\mathcal O/\pi^N\mathcal O)^r$
defined by the representation $\rho_{\mathcal O}$. Hence it defines an endomorphism $\phi_i$ on $\mathcal F_{\mathcal O}
/\pi^N\mathcal F_{\mathcal O}$. 
By Lemma \ref{little}, there exists a scalar $a_i\in \mathcal O$ such that modulo $\pi$, $\phi_i$ coincides with
the scalar multiplication by $a_i$.  Then modulo $\pi$, $\phi'$ coincides with the scalar multiplication by 
$a=e_1a_1+\cdots+ e_ma_m$.  
\end{proof}

We are now ready to prove Lemma \ref{conj}.

\begin{proof}[Proof of Lemma \ref{conj}] Suppose $\mathcal F_E$ is rigid. Let $\mathcal F^{(j)}$ $(j\in J)$ be a finite family of lisse
$\overline{\mathbb Q}_\ell$-sheaves on $X-S$ such that for any lisse $\overline{\mathbb Q}_\ell$-sheaf $\mathcal G$ on 
$X-S$ with the property  $$\mathrm{det}(\mathcal F_E\otimes_E\overline {\mathbb Q}_\ell)\cong\mathrm{det}(\mathcal G),\quad
(\mathcal F_E\otimes_E\overline {\mathbb Q}_\ell)|_{\eta_s}\cong \mathcal G|_{\eta_s}\hbox{ for all }
s\in S,$$ we have $\mathcal G\cong \mathcal F^{(j)}$ for some $j\in J$. By enlarging $E$, we may assume there exist lisse $E$-sheaves 
$\mathcal F^{(j)}_E$ on $X-S$ such that $\mathcal F^{(j)}_E\otimes_E \overline{\mathbb Q}_\ell\cong \mathcal F^{(j)}$.
Let $\mathcal F^{(j)}_{\mathcal O}$ be torsion free lisse $\mathcal O$-sheaves on $X-S$ such that  
$\mathcal F^{(j)}_{\mathcal O}\otimes_{\mathcal O}E\cong\mathcal F^{(j)}_E$. Moreover, we 
assume $\mathcal F^{(j)}$ $(j\in J)$ are pairwise non-isomorphic
and all have rank $r$. In particular, there exists one and only one $j_0\in J$ such that $\mathcal F^{(j_0)}\cong \mathcal F_E\otimes_E \overline{\mathbb Q}_\ell$. 
We take $\mathcal F^{(j_0)}_{\mathcal O}=\mathcal F_{\mathcal O}$. Since $J$ is finite, we can take a sufficiently large $N$
so that Lemma \ref{big} (i) holds, and \ref{big} (ii) holds for $\mathcal E_{\mathcal O}
=\mathcal F^{(j)}_{\mathcal O}$ for each $j\not= j_0$. 

Recall that the local $\mathcal O$-algebra homomorphism $\varphi_0: R(\rho_{\mathcal O})
\to \mathcal O$ corresponding to the point $t_0$ in $\mathfrak F^{\mathrm{rig}}$ brings the universal tuple 
$(\rho_{\mathrm{univ}}, (P_{\mathrm{univ},s})_{s\in S})$ to $(\rho_{\mathcal O},(I)_{s\in S})$. Let $a_1,\ldots,a_n$ be a family of generators 
for the ideal $\mathrm{ker}(\varphi_0)$. One can prove that the $\mathcal O$-algebra homomorphism
$$\mathcal O[[T_1,\ldots, T_n]]\to R(\rho_{\mathcal O}),\quad T_i\mapsto a_i$$
is surjective using \cite[10.23]{AM}. Let $\mathfrak a$ be its kernel.  Then $\mathfrak F^{\mathrm{rig}}$ is defined  be the zero set of the 
ideal $\mathfrak a$ in the open unit polydisc $D(0,1)^n$. Consider the admissible subdomain $V$ of $\mathfrak F^{\mathrm{rig}}$ defined by 
$$|t_i|\leq |\pi^N|\quad (i=1,\ldots, n).$$ Let $t=(t_1,\ldots, t_n)$ be a point in $V$, 
and let $\varphi_t:R(\rho_{\mathcal O})\to \mathcal O'$ be the corresponding 
local homomorphism of $\mathcal O$-algebras, where $\mathcal O'$ is the integer ring of some finite extension $E'$ of $E$. 
Note that via the isomorphism $\mathcal O[[T_1,\ldots, T_n]]/\mathfrak a\cong R(\rho_{\mathcal O}),$ $\varphi_t$ is identified with the homomorphism
$$\mathcal O[[T_1,\ldots, T_n]]/\mathfrak a\to \mathcal O',\quad f(T_1,\ldots, T_n)\mapsto f(t_1,\ldots, t_n),$$ and $\varphi_0:R(\rho_{\mathcal O})\to\mathcal O$ 
is identified with the homomorphism $$\mathcal O[[T_1,\ldots, T_n]]/\mathfrak a\to \mathcal O,\quad f(T_1,\ldots, T_n)\mapsto f(0,\ldots, 0).$$ As 
$|t_i|\leq |\pi^N|$, we have 
 $$\varphi_t\equiv \varphi_0\mod \pi^N\mathcal O'.$$ 
Let $(\rho_t,(P_{t,s})_{t\in S})$ be the tuple obtained by pushing-forward
the universal tuple 
$(\rho_{\mathrm{univ}},(P_{\mathrm{univ},s})_{s\in S})$ through $\varphi_t$. Then we have 
$$\rho_t\equiv \rho_{\mathcal O} \mod \pi^N \mathcal O',\quad \mathrm{det}(\rho_t)=\lambda,\quad
P_{t,s}^{-1} \rho_t|_{\mathrm{Gal}(\bar\eta_s/\eta_s)} P_{t,s}=\rho_{\mathcal O}|_{\mathrm{Gal}(\bar\eta_s/\eta_s)}.$$ 
Since $\rho_E$ is rigid, the last two equalities above imply that there exists $j_1\in J$ such that
$\rho_t\cong \rho^{(j_1)}_E$ as $\overline{\mathbb Q}_\ell$-representations, where we denote by
$\rho_E^{(j)}:\pi_1(X-S,\bar\eta)\to \mathrm{GL}(E^r)$ the representation corresponding to the sheaf $\mathcal F^{(j)}_E$. 
Choose $E'$ sufficiently large so that 
$\rho_t\cong  \rho^{(j_1)}_E$ as $E'$-representations. 
Let $\mathcal F_t$ be the torsion free  lisse $\mathcal O'$-sheaf on $X-S$ defined by the 
representation $\rho_t:\pi_1(X-S,\bar\eta)\to \mathrm{GL}(\mathcal O'^r)$. Then we have an isomorphism 
$\mathcal F^{(j_1)}_{\mathcal O}\otimes_{\mathcal O}E' \cong \mathcal F_t
\otimes_{\mathcal O'}E'.$
Let $
\mathcal F^{(j_1)}_{\mathcal O'}=\mathcal F^{(j_1)}_{\mathcal O}\otimes_{\mathcal O}\mathcal O'.$
We can choose a homomorphism of $\mathcal O'$-sheaves
$\alpha:
\mathcal F^{(j_1)}_{\mathcal O'}\to \mathcal F_t
$
so that after tensoring with $E'$, it 
induces an isomorphism $\mathcal F^{(j_1)}_{\mathcal O}\otimes_{\mathcal O} E' \cong \mathcal F_t\otimes_{\mathcal O'}E'$, 
and that it is nonzero modulo $\mathfrak m_{\mathcal O'}$. 
The identity  $\rho_t\equiv \rho_{\mathcal O} \mod \pi^N \mathcal O'$
defines an isomorphism 
$\beta:\mathcal F_t/\pi^N\mathcal F_t\stackrel\cong\to \mathcal F_{\mathcal O'}/\pi^N\mathcal F_{\mathcal O'},$
where $\mathcal F_{\mathcal O'}=\mathcal F_{\mathcal O}\otimes_{\mathcal O}\mathcal O'$.
Consider the composite 
$$\mathcal F^{(j_1)}_{\mathcal O'}/\pi^N\mathcal F^{(j_1)}_{\mathcal O'}
 \stackrel{\bar\alpha}\to 
\mathcal F_t/\pi^N\mathcal F_t\stackrel\beta\to \mathcal F_{\mathcal O'}/\pi^N\mathcal F_{\mathcal O'},$$
where the first homomorphism $\bar\alpha$ is induced by $\alpha$. By the choice of $\alpha$ and $\beta$, modulo ${\mathfrak m}_{\mathcal O'}$, 
$\beta\bar\alpha$ is nonzero. By Lemma \ref{big} (ii), we must have $\mathcal F^{(j_1)}\cong \mathcal F_E\otimes_E \overline{\mathbb Q}_\ell$, 
and hence $j_1=j_0$. From now on, 
we replace $\mathcal F^{(j_1)}_{\mathcal O}$ by $\mathcal F_{\mathcal O}$. 
By Lemma \ref{big} (i), modulo $\pi$, $\beta\bar\alpha$ 
coincides with a scalar multiplication on $\mathcal F_{\mathcal O'}/\pi\mathcal F_{\mathcal O'}$ 
by some $a\in\mathcal O'$. By the choice of $\alpha$ and $\beta$, modulo ${\mathfrak m}_{\mathcal O'}$, 
$\beta\bar\alpha$ is a nonzero
endomorphism of $\mathcal F_{\mathcal O'}/{\mathfrak m}_{\mathcal O'}\mathcal F_{\mathcal O'}$.  So modulo ${\mathfrak m}_{\mathcal O'}$,
$\beta\bar\alpha$ is the scalar multiplication on $\mathcal F_{\mathcal O'}/{\mathfrak m}_{\mathcal O'}\mathcal F_{\mathcal O'}$
by a nonzero element in the field $\mathcal O'/{\mathfrak m}_{\mathcal O'}$. 
In particular, 
$a$ is a unit in $\mathcal O'$, and modulo ${\mathfrak m}_{\mathcal O'}$,
$\beta\bar\alpha$ is an isomorphism. This implies that $\beta\bar\alpha$ itself is an isomorphism. Since $\beta$ is an isomorphism,
$\bar\alpha$ must be an isomorphism. This implies that $\alpha$ is an isomorphism.  The 
isomorphism $a^{-1}\alpha:\mathcal F_{\mathcal O'}\to \mathcal F_t$ defines an isomorphism of representations
$P:\mathcal O'^r\to \mathcal O'^r$ from $\rho_{\mathcal O}$ to $\rho_t$. We have 
$P\in \mathrm{GL}(\mathcal O'^r)$, 
$\rho_t P=P \rho_{\mathcal O}$ and $P\equiv I\mod\mathfrak m_{\mathcal O'}$. This proves our assertion. 
\end{proof}

\section{Proof of Xie's Lemma \ref{xie}}

Making a base change to the completion of the algebraic
closure of $K$, we may assume $K$ is a complete algebraically closed field. We may reduced to the case where 
$Y=\mathrm{Sp}\, A$ for a 
strictly $K$-affinoid algebra $A$ (in the language of Berkovich \cite{B2}). Then by definition, $\mathrm{dim}\,Y$ is 
the Krull dimension of $A$. By the Noether normalization theorem (\cite [6.1.2/2]{BGR}), there exists a finite monomorphism 
$T_d\to A$ for some Tate algebra $T_d=k\langle t_1,\ldots, t_d\rangle$ with $d=\mathrm{dim}\,Y$. Note that the induced morphism 
$Y\to \mathrm{Sp}\, T_d$ is surjective on the underlying set of points. So we can reduce to the case where $A=T_d$
and $Y=E(0,1)^d$ is the closed unit polydisc of dimension $d$. 

Let $\mathcal X$ (resp. $\mathcal X_n$, resp. $\mathcal Y$) 
be the Berkovich space associated to $X$(resp. $X_n$, resp.  $Y$). Denote the 
morphism $\mathcal X\to \mathcal Y$ corresponding to $f:X\to Y$ also by $f$. 
For any real tuple $\underline r=(r_1, \ldots, r_d)$ with $0<r_i\leq 1$ and $r_i\in |K^\ast|$, and 
any rigid point $\underline a =(a_1,\ldots,a_d)$ in $E(0,1)^d$, where $a_i\in K$ and $|a_i|\leq 1$, consider the polydisc
$$E(\underline a,\underline r)=\{(x_1,\ldots, x_d)\in K^d: |x_i-a_i|\leq r_i\}.$$  We have $E(\underline a,\underline r)\subset
E(0,1)^d$. 
We define the associated Gauss
norm $|\cdot|_{E(\underline a,\underline r)}$ on $T_d$ by 
$$|f|_{E(\underline a,\underline r)}
=\mathrm{max} \{ |f(x)| :x\in E(\underline a,\underline r)\}$$ for any $f\in T_d$.  
The Gauss norms $|\cdot|_{E(\underline a,\underline r)}$ are points in $\mathcal Y$. 
Let $\mathcal S$ be the subset of $\mathcal Y$ consisting of all Gauss norms 
associated to all polydiscs $E(\underline a,\underline r)$. Note that $\mathcal S$ is dense in $\mathcal Y$. Indeed, as the radius
$\underline r=(r_1,\ldots, r_d)$ approaches to $0$, the Gauss norm $|\cdot|_{E(\underline a,\underline r)}$ approaches to the rigid point $\underline a$, and 
it is known that the set of all rigid points is dense in $\mathcal Y$ (\cite[2.1.15]{B2}). Moreover, for any $y\in\mathcal S$, 
one can use \cite[5.1.2/2]{BGR} to show that $s(\mathcal H(y)/K)=d$,
where $\mathcal H(y)$ is the field defined in \cite[1.2.2 (i)]{B2}, and $s(\mathcal H(y)/K)=\mathrm{tr.deg}(\widetilde{\mathcal H(y)}/\widetilde K)$ is defined 
in \cite[9.1]{B2}.
We claim that $f(\mathcal X)\cap \mathcal S$ in nonempty. Otherwise, 
$f(\mathcal X_n)$ is disjoint from $\mathcal S$ for each $n$, that is, $\mathcal S\subset \mathcal Y-
f(\mathcal X_n)$. Hence $\mathcal Y-f(\mathcal X_n)$
is dense in $\mathcal Y$. Since $\mathcal X_n$ is affiniod, 
it is compact (\cite[1.2.1]{B2}). So $f(\mathcal X_n)$ is a compact 
subset in the Hausdorff space $\mathcal Y$, and hence it is a closed subset. It follows that $\mathcal Y-f(\mathcal X_n)$ is
a dense open subset of $\mathcal Y$. 
By \cite[2.1.15]{B2}, 
the subset of rigid points $(\mathcal Y-f(\mathcal X_n))\cap Y$ 
is open dense in $Y$. Here we provide
$Y$ with the topology 
induced from the Berkovich space $\mathcal Y$. But
this topology on $Y$ is induced 
by a complete metric. In fact, it is the unit polydisc in $K^n$ provided with the metric 
given by the valuation of $K$. By the Baire category theorem (\cite[9.1]{O}), the set $$\bigcap_{n=1}^\infty \Big((\mathcal 
Y-f(\mathcal X_n))\cap Y\Big)=(\mathcal Y-f(\mathcal X))\cap Y$$ is dense in 
$Y$. In particular, it is nonempty. 
This contradicts to the assumption that $f:X\to Y$ is surjective. 
So $f(\mathcal X)\cap \mathcal S$ is nonempty. Take $x\in \mathcal X$ 
such that $f(x)\in \mathcal S$. Then by \cite[9.1.3]{B2}, we have
$$\mathrm{dim}\,X\geq d(\mathcal H(x)/K)\geq s(\mathcal H(x)/K)\geq s(\mathcal H (f(x))/K)=\mathrm{dim}\,Y.$$

\end{document}